\theoremstyle{plain}
\newtheorem{Pocz}{Poczatek}[section]
\newtheorem{Conjecture}[Pocz]{Conjecture}
\newtheorem{Theorem}[Pocz]{Theorem}
\newtheorem{Corollary}[Pocz]{Corollary}
\newtheorem{Lemma}[Pocz]{Lemma}
\newtheorem{Question}[Pocz]{Question}
\theoremstyle{definition}
\newtheorem{Definition}[Pocz]{Definition}
\theoremstyle{remark}
\newtheorem{Exercise}[Pocz]{Exercise}
\numberwithin{equation}{section}
\title[Splitting of homotopy idempotents revisited]
{Splitting of homotopy idempotents revisited}
\author{Jerzy Dydak}
\address{University of Tennessee, Knoxville, TN 37996, USA}
\email{jdydak@utk.edu}
\date{ \today
}
\keywords{homotopy idempotents, idempotents, splitting of idempotents, Thompson groups}
\subjclass[2000]{ Primary 55P65; Secondary 55P55, 55U35}
\begin{document}
\maketitle
\begin{center}

    \large Piotr Minc, in memoriam

\today
\end{center}

\tableofcontents

\begin{abstract}
We are presenting proofs of fundamental results related to homotopy idempotents, proofs that are sufficiently simple so that even the author can understand them. 
The first one is that homotopy idempotents in the category of pointed connected CW complexes split and the second one is that unpointed homotopy idempotents in the category of finite-dimensional CW complexes split. Some of our proofs rectify gaps in the existing literature.
\end{abstract}

\section{Introduction}

\begin{Definition}
A \textbf{homotopy idempotent} on a CW complex $K$ is a map $f:K\to K$ such that $f\circ f$ is homotopic to $f$. It is said to \textbf{split} if there are maps $d:K\to L$ and $u:K\to L$ such that $d\circ u\sim id_L$ and $f\sim u\circ d$.
\end{Definition}
The above definitions can be made in both the category of pointed connected CW complexes and in the category of unpointed CW complexes.

It
 is well known (see E.M. Brown \cite{Br}, D.A.Edwards and R.Geoghegan \cite{EG}, P.Freyd \cite{Fr}) that pointed homotopy idempotents on pointed connected CW complexes split. However, those proofs involve the so-called Brown Representability Theorem and Matthew Brin \cite{Brin} asked this author if there is a simpler proof of that fact. That prompted the author to look at the existing literature on the subject. To his surprise he discovered that many published papers contain overly optimistic statements (lifts of homotopy idempotens are homotopy idempotents in \cite{HH}), and the essential fact that the natural homomorphism from the Thompson group must be a monomorphism if the idempotent does not split has not been published yet despite being used in most papers on the subject.
Also, the crucial paper \cite{HH} has so many misprints (lack of the tilde over many spaces, elements of a spectral sequence that need to be trivial but are labeled as non-trivial, mispelled representation of the Thompson group) that actually it is hard to follow it.

J.Dydak and P.Minc \cite{JD}, and Freyd and Heller \cite{FH}  independently found
 an unpointed homotopy idempotent on an infinite-dimensional complex which does
 not split. That means there is a fundamental difference between the pointed and unpointed categories, so the paper is divided into parts corresponding to those categories.

The new aspect of the paper is posing two questions: 
\ref{MyConjecture} and \ref{UniversalGroupConjecture}.

The author wishes to thank Matt Brin \cite{Brin} for posing a question of existence of a proof that pointed homotopy idempotents split without appealing to the Brown Representability Theorem.

This paper is dedicated to the memory of Piotr Minc (January 9, 1949 - September 24, 2023).

\section{Geometric aspect of idempotents}\label{Infinite telescope}
In this section, given a homotopy idempotent $f:K\to K$, we construct a geometric object that is the primary guess (see Exercise \ref{Exercise}) for the target $K\to L$ in splitting of $f$.

\begin{Definition}
Given a sequence of pointed cellular maps $f_n:K_n\to K_{n+1}$, $-\infty < n < \infty$, on connected CW complexes $K_n$, its \textbf{infinite telescope} $Tel(\{f_n\})$ is the infinite union $M_n$, $-\infty< n < \infty$, where $M_n$ is the \textbf{unreduced} mapping cylinder of $f_n:K_n\to K_{n+1}$.
\end{Definition}

Notice that $Tel(\{f_n\})$ has a natural subset $R$ homeomorphic to the reals that we will use as a base for free homotopy groups of $Tel(\{f_n\})$ (see Section \ref{FreeHomotopyGroups}).

\begin{Lemma}\label{UniversalCoverOfTel}
If all $K_n$ are connected, then the natural map
$p:Tel(\{\tilde f_n\})\to Tel(\{f_n\})$ lifts to a homotopy equivalence
$g:Tel(\{\tilde f_n\})\to \widetilde{Tel(\{f_n\})}$.
\end{Lemma}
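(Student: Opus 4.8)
The plan is to exploit the fact that both telescopes are exhausted by finite sub-telescopes, each of which deformation retracts onto one of the base complexes, and then to read off the homotopy groups of the telescopes as direct limits.

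First I would fix the exhaustion. Write $Tel(\{f_n\})=\bigcup_N T_N$, where $T_N=M_{-N}\cup\cdots\cup M_{N-1}$ is an increasing union of subcomplexes. Pushing each mapping cylinder onto its right-hand base, $T_N$ deformation retracts onto $K_N$, and under these retractions the inclusion $T_N\hookrightarrow T_{N+1}$ becomes the map $f_N\colon K_N\to K_{N+1}$ up to homotopy. Since every sphere and every sphere-times-interval maps into a finite subcomplex, hence into some $T_N$, homotopy groups commute with this union, and therefore $\pi_k(Tel(\{f_n\}))\cong\varinjlim\big(\pi_k(K_n),(f_n)_*\big)$ for every $k$. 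The identical discussion upstairs gives $\pi_k(Tel(\{\tilde f_n\}))\cong\varinjlim\big(\pi_k(\tilde K_n),(\tilde f_n)_*\big)$, and, crucially, since each $\tilde f_n$ is a lift of $f_n$, the covering maps $\tilde K_n\to K_n$ form an \emph{honest morphism of these two direct systems} whose colimit is $p_*$.

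Second, taking $k=1$ and using that the $\tilde K_n$ are simply connected, $\pi_1(Tel(\{\tilde f_n\}))=\varinjlim\{1\}=1$; moreover $Tel(\{\tilde f_n\})$ is path-connected (it is a union of connected mapping cylinders glued along the non-empty connected complexes $\tilde K_{n+1}$) and a CW complex. The complex $Tel(\{f_n\})$, being a CW complex, is locally contractible, hence admits a universal cover $q\colon\widetilde{Tel(\{f_n\})}\to Tel(\{f_n\})$. By the lifting criterion (simple connectivity of the source together with local path-connectedness) the map $p$ lifts to $g\colon Tel(\{\tilde f_n\})\to\widetilde{Tel(\{f_n\})}$ with $q\circ g=p$, once compatible basepoints on the subset $R$ are chosen. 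I would then check that $g$ is a weak homotopy equivalence: on $\pi_0$ and $\pi_1$ there is nothing to prove, since source and target are both path-connected and simply connected; for $k\ge 2$, the covering map $q$ induces an isomorphism on $\pi_k$, and $p_*$ is an isomorphism on $\pi_k$ because, by the first step, it is the colimit of the maps $(\tilde K_n\to K_n)_*$, each of which is a $\pi_k$-isomorphism for $k\ge 2$; from $q_*\circ g_*=p_*$ it follows that $g_*$ is an isomorphism on all $\pi_k$. Since $Tel(\{\tilde f_n\})$ and $\widetilde{Tel(\{f_n\})}$ are CW complexes (a covering space of a CW complex is one), Whitehead's theorem upgrades $g$ to a genuine homotopy equivalence.

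The main obstacle, and essentially the only place that needs care, is the first step: arranging the deformation retractions of the finite sub-telescopes upstairs and downstairs so that they are strictly compatible with the projection $p$, and so that under them the bonding inclusions genuinely become $f_n$ and $\tilde f_n$ — in other words, producing a bona fide morphism between the two telescopic direct systems rather than merely an abstract isomorphism of colimits. Everything afterward is a diagram-chase through the lifting criterion and Whitehead's theorem.
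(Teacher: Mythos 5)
Your proposal is correct and follows essentially the same route as the paper: both arguments rest on pushing spheres (and null-homotopies) into a finite stage $K_n$ or $\tilde K_n$ of the telescope, where the covering maps $\tilde K_n\to K_n$ are $\pi_k$-isomorphisms for $k\ge 2$, and then invoking simple connectivity of $Tel(\{\tilde f_n\})$ together with Whitehead's theorem. Your version merely makes explicit the direct-limit bookkeeping and the lifting criterion that the paper leaves implicit.
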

\begin{proof}
It suffices to note that $g$ induces isomorphisms of all higher homotopy groups. If $h:S^k\to Tel(\{\tilde f_n\})$, $k\ge 2$, gets mapped to $0$,
then it can be pushed to some high $\tilde K_n$, so that
$p_n\circ h$ is null-homotopic in $K_n$. Therefore $h$ is null-homotopic.

Conversely, if $h:S^k\to Tel(\{f_n\})$, then it can be pushed in to some high
$K_n$ and then lifted.
\end{proof}

From now on, 
given a pointed cellular map $f:K\to K$ on a connected CW complex $K$, its \textbf{infinite telescope} $Tel(f)$ is the infinite telescope of $f_n:K_n\to K_{n+1}$, where $K_n=K$, $f_n=f$ for each $n$.

\begin{Definition}\label{MapsRelTel}
Given a pointed cellular map $f:K\to K$ on a connected CW complex $K$ and a (not necessarily pointed) homotopy $H:K\times I\to K$ joining $f$ and $f^2$, we define two maps
$d:K\to Tel(f)$ and $u:Tel(f)\to K$ as follows:\\
1. $d=f$ from $K$ to $K_0$,\\
2. on each $K_n$, $u=f$, and $u|M_n$ is simply induced by $H$.
\end{Definition}

\begin{Exercise}\label{Exercise}
Show that, if $f:K\to K$ is a homotopy idempotent that splits as $K\to L$ and $L\to K$, then $L$ is homotopy equivalent to $Tel(f)$.
\end{Exercise}

The next two lemmata illustrate why switching to $Tel(f)$ makes sense in order to determine if $f$ splits. They are also the basis for Conjecture \ref{MyConjecture}.

\begin{Lemma}\label{FirstGeometricObstruction}
Suppose $f:K\to K$ is a homotopy idempotent and $H:K\times I\to K$ is a homotopy joining $f$ and $f^2$. $g=d\circ u:Tel(f)\to Tel(f)$ is a homotopy idempotent and, if it splits, then so does $f$.
\end{Lemma}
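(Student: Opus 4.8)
The plan is to reduce the entire statement to two elementary observations and then juggle compositions. The first observation is the on-the-nose identity $u\circ d = f^2$: by Definition \ref{MapsRelTel}, $d$ sends $K$ onto $K_0$ by $f$, and $u$ restricted to $K_0$ is $f$, so $u\circ d = f\circ f$. The second observation is that every positive power of $f$ is homotopic to $f$: composing $H$ (a homotopy $f\sim f^2$) on the left with $f^{\,n-2}$ produces a homotopy $f^{\,n-1}\sim f^{\,n}$, hence $f\sim f^2\sim f^3\sim\cdots$. I will also use repeatedly that a homotopy between two maps $K\to K$ may be regarded, via the inclusion $K_0\hookrightarrow Tel(f)$, as a homotopy between the corresponding maps $K\to Tel(f)$.

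To see that $g=d\circ u$ is a homotopy idempotent I would compute
\[
g\circ g \;=\; d\circ(u\circ d)\circ u \;=\; d\circ f^2\circ u .
\]
Now $d\circ f^2$ is the map $f^3\colon K\to K_0\subseteq Tel(f)$, and since $f^3\sim f$ with a homotopy taking place inside $K_0$, we get $d\circ f^2\sim d$ as maps $K\to Tel(f)$; precomposing this homotopy with $u$ yields $g\circ g\sim d\circ u=g$.

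For the second half, suppose $g$ splits: let $d'\colon Tel(f)\to L$ and $u'\colon L\to Tel(f)$ satisfy $d'\circ u'\sim id_L$ and $u'\circ d'\sim g$. The natural candidate for a splitting of $f$ through the same $L$ is $D=d'\circ d\colon K\to L$ and $U=u\circ u'\colon L\to K$. Then
\[
D\circ U \;=\; d'\circ(d\circ u)\circ u' \;=\; d'\circ g\circ u' \;\sim\; (d'\circ u')\circ(d'\circ u') \;\sim\; id_L ,
\]
and, using the identity $u\circ d=f^2$ together with $u'\circ d'\sim g=d\circ u$,
\[
U\circ D \;=\; u\circ(u'\circ d')\circ d \;\sim\; u\circ d\circ u\circ d \;=\; f^2\circ f^2 \;=\; f^4 \;\sim\; f ,
\]
so $f$ splits.

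I do not expect a genuine obstacle here: the argument is entirely formal. The only points requiring care are (i) verifying the identity $u\circ d=f^2$ directly from the definitions of $d$ and $u$, and (ii) making sure each invoked homotopy between powers of $f$ is realized inside $K_0$, so that pre- and post-composing it with $d,u,d',u'$ is legitimate. One should also keep track of base points: since $H$ is allowed to be unpointed, the statement reads most naturally in the unpointed (free homotopy) category, but the same computation proves the pointed version verbatim whenever the maps and homotopies above can be chosen pointed.
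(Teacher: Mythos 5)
Your proof is correct and follows essentially the same route as the paper: both use the on-the-nose identity $u\circ d=f^2$ together with $d\circ f^2\sim d$ to get $g\circ g\sim g$, and both split $f$ through the same $L$ via $d'\circ d$ and $u\circ u'$ with identical computations. No issues.
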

\begin{proof}
Notice $d\circ f\sim d$, so $d\circ f^2\sim d$. Therefore,
$g\circ g= d\circ u\circ d\circ u= d\circ f^2\circ  u\sim d\circ u=g$.

Suppose $ d\circ u\sim u_1\circ d_1$, $d_1:Tel(f)\to L$ and $u_1:L\to Tel(f)$,
so that $u_1\circ d_1\sim g$ and $d_1\circ u_1\sim id_L$. Let $u_2:L\to K$ be equal to $u\circ u_1$
and let $d_2:K\to L$ be equal to $d_1\circ d$.

Now, $d_2\circ u_2= d_1\circ d\circ u\circ u_1\sim d_1\circ g\circ u_1 
\sim d_1\circ u_1\circ d_1 \circ u_1\sim id_L\circ id_L=id_L $
and $u_2\circ d_2= u\circ u_1\circ d_1\circ d\sim u\circ g\circ d=
u\circ d\circ u \circ d\sim f^2\circ f^2\sim f$.

\end{proof}

\begin{Lemma}\label{FiniteComplexesLemma}
Suppose $f:K\to K$ is a homotopy idempotent and $H:K\times I\to K$ is a homotopy joining $f$ and $f^2$. For every map $h:L\to Tel(f)$, $d\circ 
u \circ h$ is homotopic to $h$ if $L$ is a finite CW complex.
\end{Lemma}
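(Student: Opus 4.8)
The plan is to exploit the fact that $L$ is a finite CW complex, hence compact, so its image $h(L)$ lies inside a finite subtelescope. Concretely, $Tel(f)$ is the ascending union of the "truncated" telescopes $T_{[m,n]}$ consisting of the mapping cylinders $M_m,\dots,M_n$; each such truncation deformation retracts onto its last copy $K_{n+1}$. Since $h(L)$ is compact, it is contained in some $T_{[m,n]}$, and composing with the deformation retraction we may assume, up to homotopy, that $h$ factors through the inclusion $j_{n+1}:K_{n+1}\hookrightarrow Tel(f)$ of a single copy of $K$ sitting at level $n+1$; write $h\sim j_{n+1}\circ h'$ with $h':L\to K$.

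Next I would identify what $d\circ u$ does to $j_{n+1}$. By Definition \ref{MapsRelTel}, $u$ restricted to the copy $K_{n+1}$ is $f$, so $u\circ j_{n+1}\sim f$ (as a map $K\to K$), and $d=f:K\to K_0$, so $d\circ u\circ j_{n+1}\sim j_0\circ f^2$, where $j_0:K_0\hookrightarrow Tel(f)$. On the other hand, inside $Tel(f)$ the copies $K_0$ and $K_{n+1}$ are connected through the chain of mapping cylinders $M_0,\dots,M_n$, and each inclusion $K_k\hookrightarrow M_k$ followed by the collapse $M_k\to K_{k+1}$ realizes the map $f$; tracing through, the inclusion $j_0:K\hookrightarrow Tel(f)$ at level $0$ is homotopic in $Tel(f)$ to $j_{n+1}\circ f^{n+1}$, and more generally $j_0\circ g$ is homotopic to $j_{n+1}\circ f^{n+1}\circ g$ for any $g$. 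Applying this with $g=f^2\circ h'$ gives
\[
d\circ u\circ h \;\sim\; j_0\circ f^2\circ h' \;\sim\; j_{n+1}\circ f^{n+1}\circ f^2\circ h' \;\sim\; j_{n+1}\circ h',
\]
where the last homotopy uses that $f^{n+3}\sim f$ (a homotopy idempotent satisfies $f^k\sim f$ for all $k\ge 1$) together with the observation that, since $h'$ already factors a copy of $K$ that was obtained by collapsing enough mapping cylinders, we can absorb the extra powers of $f$ by sliding $h$ one more level down the telescope — equivalently, $j_{n+1}\circ f^{n+1}\circ f^2 \sim j_{n+1+(n+3)}\circ f^{n+1}\circ f^2$ composed back is just $j_{n'}$ for a larger $n'$, and $j_{n'}\circ h'\sim h$ because all the $j_m$ agree up to the telescope homotopies. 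Hence $d\circ u\circ h\sim h$.

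The main obstacle is bookkeeping the homotopies between the various inclusions $j_m:K\hookrightarrow Tel(f)$ and making precise the slogan "push $h$ further down the telescope and the stray powers of $f$ disappear." The cleanest way to handle this is to fix, once and for all, the standard homotopy $\eta_m: j_m \sim j_{m+1}\circ f$ supplied by the mapping cylinder $M_m$, and to note that $d\circ u$ restricted to any finite truncation is, up to these canonical homotopies, the self-map "shift by one level and apply $f$" — which is exactly the map used to build $Tel(f)$ in the first place. Then "$d\circ u\circ h\sim h$ for compact $h$" becomes the statement that the telescope is the homotopy colimit and its structure maps become identities after passing far enough along, which is a routine compactness argument. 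I do not expect any genuinely hard step beyond organizing these cofinality/compactness manipulations.
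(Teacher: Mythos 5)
Your argument is essentially the paper's: use compactness to push $h(L)$ into a single copy of $K$, unwind $d\circ u$ through the mapping cylinders, and kill the accumulated powers of $f$ with the idempotent homotopy. The one place that needs tightening is your final step $j_{n+1}\circ f^{n+1}\circ f^2\circ h'\sim j_{n+1}\circ h'$, and in particular the justification ``all the $j_m$ agree up to the telescope homotopies.'' That statement is false for a general telescope: the cylinder structure only gives $j_m\sim j_{m+1}\circ f$ (for $f$ of degree $2$ on $S^1$ the inclusions $j_m$ are pairwise non-homotopic), so after reducing $f^{n+3}$ to $f$ you are left with $j_{n+1}\circ f\circ h'\sim j_n\circ h'$ versus the target $j_{n+1}\circ h'$, and the discrepancy $j_n\sim j_{n+1}$ is exactly the content of the lemma, not a formal property of telescopes. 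It does hold here, but only by one more application of $H$: $j_{n+1}\sim j_{n+2}\circ f\sim j_{n+2}\circ f^2\sim j_{n+1}\circ f\sim j_n$. (Your alternative justification --- absorbing the stray $f$ into $h'$ because $h'$ already factors through $f$ --- also works, but only when the deformation retraction actually moved $h$ at least one level; if $h(L)$ already lies in the top copy $K_{n+1}$ then $h'=h$ carries no factor of $f$, and you must push one level further first.) The paper sidesteps this bookkeeping by proving instead that all the maps $d_k=j_k\circ f$ are mutually homotopic ($d_k\sim j_{k+1}\circ f^2\sim j_{k+1}\circ f=d_{k+1}$) and then comparing $d_{n+1}\circ u\circ h=j_{n+1}\circ f^2\circ h$ directly with $h=j_n\circ h$, which requires only one telescope homotopy and one application of $H$; you may find that arrangement cleaner.
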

\begin{proof}
Let $d_n:K\to Tel(f)$ be the same as $f:K\to K_n$. Thus, $d=d_0$.

We may assume that $h(L)\subset K_n$ for some $n > 0$ as $h(L)$ is contained in a finite union of mapping cylinders $M_k$ and their union has a deformation retraction to one of $K_n$'s. Now, $g$ can be pushed in $Tel(f)$ to $K_{n+1}$  and in that range 
$d_{n+1}\circ 
u \circ h=f^2\circ h\sim f\circ h$, so $d\circ 
u \circ h\sim h$.
\end{proof}

\begin{Corollary}\label{PointedIdempotents}
Given a a cellular map $f:K\to K$ on a pointed connected CW complex $K$ and a pointed homotopy $H:K\times I\to K$ joining $f$ and $f^2$, $d\circ 
u :Tel(f)\to K$ is homotopic to the identity on $Tel(f)$. In particular, $f$ splits as
$f\sim u\circ 
d=f^2$.
\end{Corollary}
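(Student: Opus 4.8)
The plan is to exploit the previous two lemmata together with a telescope-specific ``finiteness up to homotopy'' argument. By Lemma \ref{FiniteComplexesLemma}, for every \emph{finite} CW complex $L$ and every map $h\colon L\to Tel(f)$ we already know $d\circ u\circ h\sim h$. Since $Tel(f)$ is itself a CW complex and hence the colimit of its finite subcomplexes, the strategy is to upgrade this to $d\circ u\sim id_{Tel(f)}$ by a standard colimit/compactness argument, but this needs care because a single homotopy must be produced coherently over all skeleta at once. So first I would reduce to showing that $d\circ u$ and $id_{Tel(f)}$ agree after restriction to each finite subcomplex, then assemble a global homotopy.

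First I would set up notation as in Definition \ref{MapsRelTel}: $d=f\colon K\to K_0\subset Tel(f)$ and $u\colon Tel(f)\to K$ restricting to $f$ on each $K_n$ and to the map induced by $H$ on each mapping cylinder $M_n$. The key structural observation is that, because $H$ is a \emph{pointed} homotopy, the map $u$ is pointed and the shift self-map of $Tel(f)$ (pushing $K_n$ into $K_{n+1}$) is pointed-homotopic to the identity; concretely, $Tel(f)$ deformation retracts onto the union $\bigcup_{k\ge n}M_k$ for every $n$, and these retractions can be chosen compatibly. Then I would observe that $d\circ u$, followed by the inclusion, is exactly the composite $Tel(f)\xrightarrow{u}K\xrightarrow{d=f}K_0\hookrightarrow Tel(f)$, so $d\circ u$ factors through $K$.

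Next, the main step: I would show $d\circ u\sim id_{Tel(f)}$ directly. Apply Lemma \ref{FiniteComplexesLemma} with $L=Tel(f)^{(m)}$, the $m$-skeleton — but that is infinite, so instead I take $L$ ranging over finite subcomplexes. For each finite subcomplex $A\subset Tel(f)$, Lemma \ref{FiniteComplexesLemma} (applied to the inclusion $h\colon A\hookrightarrow Tel(f)$) gives $(d\circ u)|_A\sim \mathrm{incl}_A$. The obstruction is that these homotopies are not a priori compatible on overlaps. To fix this, I would instead run the argument of Lemma \ref{FiniteComplexesLemma} uniformly: given any finite subcomplex $A$, $A\subset \bigcup_{k\le N}M_k$ for some $N$, which deformation retracts onto $K_N$; composing, $u|_A$ lands after homotopy in some $K_n$ with $n$ bounded in terms of $A$, and there $d_{n+1}\circ u\circ h=f^2\circ h\sim f\circ h=h$ within $Tel(f)$. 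Because the homotopies furnished by $H$ and by the telescope deformation retractions are canonical (not choices depending on $A$), restricting to a larger finite $A'$ extends the homotopy on $A$. By the homotopy extension property for CW pairs, one builds the global homotopy $d\circ u\sim id_{Tel(f)}$ skeleton by skeleton, or equivalently as the colimit of the compatible homotopies on finite subcomplexes.

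I expect the main obstacle to be exactly this coherence issue: Lemma \ref{FiniteComplexesLemma} is stated only for a single finite complex, and turning a family of homotopies on finite subcomplexes into one homotopy on all of $Tel(f)$ requires either that the homotopies be canonical or an explicit inductive construction using the HEP; the pointedness of $H$ is what makes the telescope's self-shift pointed-null-homotopic and hence makes the canonical choices available. Once $d\circ u\sim id_{Tel(f)}$ is established, the ``in particular'' clause is immediate: by Lemma \ref{FirstGeometricObstruction} (or directly) $u\circ d\sim f^2\sim f$, and $d\circ u\sim id$ means $f$ splits through $Tel(f)$; moreover $u\circ d=f\circ f=f^2$ on the nose, giving $f\sim u\circ d=f^2$.
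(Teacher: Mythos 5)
There is a genuine gap, and it sits exactly where you flagged it: the assembly of the homotopies $(d\circ u)|_A\sim \mathrm{incl}_A$ over finite subcomplexes $A$ into a single homotopy on $Tel(f)$. The homotopy produced by Lemma \ref{FiniteComplexesLemma} for a subcomplex contained in $\bigcup_{k\le N}M_k$ first slides the subcomplex down to $K_N$ and then uses $H$ and the cylinder $M_N$; for a larger subcomplex one is forced to use a larger $N$, and the resulting homotopy is a genuinely different (longer) one whose restriction does not agree with the earlier choice. These homotopies are therefore not canonical in the sense you need, and the obstruction to gluing a system of homotopies that agree only up to higher homotopy on an increasing union is a real ($\lim^1$-type) obstruction that the HEP alone does not kill. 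Decisive evidence that the strategy cannot work as stated: Lemma \ref{FiniteComplexesLemma} holds for an arbitrary, unpointed homotopy $H$, so your argument, if correct, would give $d\circ u\sim id_{Tel(f)}$ and hence (via Lemma \ref{FirstGeometricObstruction}) that \emph{every} unpointed homotopy idempotent splits --- contradicting the Dydak--Minc and Freyd--Heller counterexamples cited in the introduction. Pointedness must enter in a way that does more than make choices ``canonical.''

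The paper's proof avoids the issue entirely: it applies Lemma \ref{FiniteComplexesLemma} only to maps of spheres (working with the reduced-mapping-cylinder telescope so that everything is pointed), concluding that $d\circ u$ induces the identity on all homotopy groups of $Tel(f)$; the Whitehead Theorem then makes $d\circ u$ a homotopy equivalence; and finally the relation $(d\circ u)\circ(d\circ u)=d\circ f^2\circ u\sim d\circ u$ lets one cancel the homotopy equivalence $d\circ u$ from one side to get $d\circ u\sim id_{Tel(f)}$. Your proposal is missing both of these steps (Whitehead, and the cancellation-via-idempotency), which are the actual content of the corollary. Your treatment of the ``in particular'' clause ($u\circ d=f^2$ on the nose and $f^2\sim f$) is fine.
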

\begin{proof}
By Lemma \ref{FiniteComplexesLemma} in the pointed case (one can use the infinite telescope composed of reduced mapping cylinders of $f$), $ d\circ 
u$ induces the identity on all homotopy groups of $Tel(f)$. Hence, $d\circ 
u$ is a homotopy equivalence by the Whitehead Theorem. However, as in the proof of Lemma \ref{FirstGeometricObstruction},
$$ (d\circ u)\circ (d\circ u)=d\circ f^2\circ u\sim d\circ 
u$$
resulting in
$$ d\circ 
u\sim id_{Tel(f)}.$$
\end{proof}

\section{Algebraic aspects of idempotents}
In this section we summarize the algebraic results related to unpointed homotopy idempotents that are of interest to us. We cite purely algebraic results \ref{FreeAbelianSubgroupsOfF}, \ref{L4Heller-Freyd}, and we show how to use them in connection to conjugate idempotents of groups.

First of all, recall that by the Thompson group $F$ we mean the group
with generators $a_i, i\ge 0$, and relations $a_i^{-1}\cdot a_j\cdot a_i=a_{j+1}$ for all $i < j$. The \textbf{shift homomorphism} $s:F\to F$ satisfies $s(a_i)=a_{i+1}$ for all $i\ge 0$.

The easiest way to prove that, in the case of unsplittable homotopy idempotent $f:K\to K$, there is a monomorphism $F\to \pi_1(K)$ is to use the fact that the image of any $h:F\to G$ is Abelian if $h$ is not a monomorphism. That fact can be found in \cite{FH} but it is too involved for our taste.

See \cite{JD} for a representation of $F$ due to Piotr Minc that is handy in proving the following (see also \cite{HH2}):
\begin{Lemma}\label{FreeAbelianSubgroupsOfF}
Elements $a_{3i}^{-1}\cdot a_{3i+1}$, $i\ge 0$, of $F$ commute and are linearly independent.
\end{Lemma}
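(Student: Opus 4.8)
The statement has two parts: (1) the elements $b_i := a_{3i}^{-1}a_{3i+1}$ commute pairwise, and (2) they are linearly independent, i.e. generate a free abelian group of countable rank. For the commuting part, the plan is to work directly from the presentation. The relation $a_i^{-1}a_ja_i=a_{j+1}$ for $i<j$ lets one "move" indices; extending it, one gets $a_i^{-1}a_ja_i = a_{j+1}$ and more generally a normal-form calculus for $F$. The key observation is that whenever the index sets $\{3i,3i+1\}$ and $\{3j,3j+1\}$ are "far apart" — and here consecutive blocks are separated by the unused generator $a_{3i+2}$ — the corresponding elements should commute. Concretely, I would verify that $b_i$ and $b_j$ commute for $i<j$ by repeatedly applying the defining relations to push all the high-index generators past the low-index ones, exploiting that $3i+1 < 3j$ strictly (the gap being at least $a_{3i+2}$). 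This is essentially a finite rewriting computation in the free group modulo the relations; it is routine but must be done carefully.

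For the linear independence, the clean route is to exhibit a homomorphism from $F$ to a group where the images of the $b_i$ are visibly independent, or to use a known model of $F$. The excerpt points to \cite{JD} and Minc's representation, and mentions \cite{HH2}; I would use the standard realization of $F$ as a group of piecewise-linear homeomorphisms of $[0,1]$ with dyadic breakpoints and slopes powers of $2$, under which $a_i$ acts as a homeomorphism supported on an interval shrinking toward $1$ as $i$ grows. Under this model $a_{3i}^{-1}a_{3i+1}$ is supported in a small dyadic interval $J_i$, and the intervals $J_0, J_1, J_2, \dots$ can be taken pairwise disjoint (again using the index-$3$ spacing). A product $b_{i_1}^{n_1}\cdots b_{i_k}^{n_k}$ with distinct $i_j$ then restricts on $J_{i_j}$ to $b_{i_j}^{n_j}$, which is the identity only if $n_j=0$, since a single $b_i$ acts on $J_i$ as a nontrivial PL homeomorphism of infinite order. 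This simultaneously reproves commutativity (disjoint supports commute) and gives independence in one stroke.

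The main obstacle is bookkeeping: one must pin down precisely which intervals support $a_{3i}$ and $a_{3i+1}$ in the chosen PL model, check that $a_{3i}^{-1}a_{3i+1}$ is genuinely nontrivial on the relevant dyadic interval and not accidentally the identity there, and confirm the supports $J_i$ are pairwise disjoint — all of which hinge on the "step $3$" choice of indices and would fail for, say, consecutive indices $a_i^{-1}a_{i+1}$ without extra care. I expect the commuting part to be the place where, working purely syntactically from the presentation, the computation is most error-prone; switching to the homeomorphism model makes it transparent, so the real content is setting up that model correctly and citing the normal-form facts about $F$ from \cite{JD}. Once the supports are disjoint, both conclusions follow immediately.
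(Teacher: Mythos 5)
The paper offers no proof of this lemma at all---it defers to Minc's representation of $F$ in \cite{JD} (see also \cite{HH2})---so your proposal is not so much an alternative route as an actual proof where the paper has only a citation; it is correct. Your disjoint-support argument in the standard PL model works: with $a_n$ realized as the homeomorphism supported on $[1-2^{-n},1]$ acting there as the rescaled $a_0$, one checks that $a_n$ and $a_{n+1}$ agree (both equal to $t\mapsto 2t-1$) on $[1-2^{-n-3},1]$, so $a_{3i}^{-1}a_{3i+1}$ is supported in $[1-2^{-3i},1-2^{-3i-3}]$ and is nontrivial there (it has slope $2$ just to the right of $1-2^{-3i}$); these intervals have pairwise disjoint interiors, which gives both commutativity and linear independence at once, and your observation that the step-$3$ spacing is essential is also right (one can check from the relations that $a_i^{-1}a_{i+1}$ fails to commute with $a_j^{-1}a_{j+1}$ for $j=i+1,i+2$). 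Two small points to make explicit in a write-up: (i) linear independence only needs the PL assignment to be a \emph{homomorphism} out of the presented group $F$ (verify the relations $a_i^{-1}a_ja_i=a_{j+1}$ in the model), since a nontrivial image certifies a nontrivial element; but (ii) deducing commutativity \emph{in $F$} from commutativity of the images requires the faithfulness of the PL representation, a standard but nontrivial theorem---so either cite that, or (cleaner) prove commutativity purely from the relations as you sketch in your first paragraph: for $i<j$ one computes $b_i^{-1}a_{3j}b_i=a_{3i+1}^{-1}a_{3j-1}a_{3i+1}=a_{3j}$ and likewise $b_i^{-1}a_{3j+1}b_i=a_{3j+1}$, using only that $3i+1<3j-1$, whence $b_i$ centralizes $b_j$. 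With either fix the argument is complete.
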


Notice that, given an endomorphism $f:G\to G$ of a group $G$
such that $x_0^{-1}\cdot g(x)\cdot x_0$ for some $x_0\in G$ and all $x\in G$,, there is a natural homomorphism $e:F\to G$ defined by
$$e(a_0)=x_0, e(a_k)=(f^k)(x_0), k\ge 1.$$

The following is a purely algebraic result whose clear proof one can find in \cite{FH}  (L4 on p.99):
\begin{Lemma}[L4 of Freyd-Heller \cite{FH}]\label{L4Heller-Freyd}
Any non-trivial element of $F$ is conjugate to an element
of the form $a_i^n\cdot s^{i+1}(b)$ for some $n\ne 0$ and $i=0,1$.
\end{Lemma}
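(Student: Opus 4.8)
The plan is to work inside the standard model of $F$ as the group of increasing piecewise-linear homeomorphisms of $[0,1]$ with breakpoints at dyadic rationals and slopes integer powers of $2$, with composition of functions as the group operation. In this model $a_0,a_1$ are Thompson's generators, $a_n$ is the identity on $[0,1-2^{-n}]$ for $n\ge 1$, the shift $s$ sends an element to the rescaled copy of it supported on $[1/2,1]$, and $s^{i+1}(F)=\langle a_{i+1},a_{i+2},\dots\rangle$ is exactly the subgroup of elements fixing $[0,1-2^{-(i+1)}]$ pointwise; in particular $s(F)$ is the pointwise stabilizer of $[0,1/2]$. I will also use the classical transitivity of $F$ on the dyadic rationals in $(0,1)$: for any dyadic $c\in(0,1)$ there is $h\in F$ with $h(c)=1/2$.

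The heart of the argument is a normalization step: if $g\ne 1$ moves points arbitrarily close to $0$, then $g$ is conjugate to $a_0^{n}s(b)$ for some $n\ne 0$ and $b\in F$. Near $0$ the map $g$ is linear, of slope $2^{-n}$ for some integer $n$; if $n=0$ then $g$ would be the identity on a neighborhood of $0$, so $n\ne 0$. Set $w=a_0^{-n}g$. Then $w(0)=0$ and $w$ has slope $1$ at $0$, so, being piecewise linear, $w$ is the identity on some $[0,\delta]$ with $\delta>0$ dyadic, and $g=a_0^{n}w$. Conjugating by a sufficiently high power of $a_0$ does not affect the factor $a_0^{n}$ (powers of $a_0$ commute) but drags the trivial part of $w$ past $1/2$, because $a_0$ contracts toward $0$: for $k$ large one has $a_0^{k}([0,1/2])\subseteq[0,\delta]$, hence $a_0^{-k}wa_0^{k}$ fixes $[0,1/2]$ pointwise. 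Therefore $a_0^{-k}ga_0^{k}=a_0^{n}(a_0^{-k}wa_0^{k})=a_0^{n}s(b)$ for a suitable $b$, with $n\ne 0$.

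Now let $g\ne 1$ be arbitrary. If $g$ moves points arbitrarily close to $0$, the normalization step finishes the proof with $i=0$. Otherwise $g$ fixes a neighborhood of $0$, and since $g$ is piecewise linear with dyadic breakpoints its fixed-point set is a finite union of intervals; let $c$ be the left endpoint of the leftmost interval on which $g$ is not the identity. Then $c$ is a breakpoint of $g$, hence dyadic, and $0<c<1$. Pick $h\in F$ with $h(c)=1/2$; then $g_1=hgh^{-1}$ fixes $[0,1/2]$ pointwise and moves points immediately to the right of $1/2$, so $g_1\in s(F)\setminus\{1\}$ and we may write $g_1=s(g')$ with $g'\in F$. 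Since $s$ transports the standard action of $F$ on $[0,1]$ to the rescaled action of $s(F)$ on $[1/2,1]$, the element $g'$ moves points arbitrarily close to $0$; by the normalization step $g'$ is conjugate to $a_0^{n}s(b)$ with $n\ne 0$. Applying the homomorphism $s$, the element $g_1=s(g')$ is conjugate to $s(a_0^{n}s(b))=a_1^{n}s^{2}(b)$, and hence so is $g$. This completes the proof, with $i=1$.

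I expect the normalization step to be the main obstacle. The delicate points there are: justifying that $a_0^{-n}g$ really is the identity on a neighborhood of $0$ — this is exactly where one uses that elements of $F$ are genuinely piecewise linear, so that a germ of slope $1$ at an endpoint is the germ of the identity there; and choosing the correct power of $a_0$ (rather than its inverse) to conjugate by, which relies on $a_0$ being the contracting generator at $0$ and on powers of $a_0$ commuting with the factor $a_0^{n}$. The remaining work is largely bookkeeping: checking that the given presentation of $F$ is the piecewise-linear model, recording the descriptions of $s$ and of the subgroups $s^{i+1}(F)$, and invoking transitivity of $F$ on dyadic points.
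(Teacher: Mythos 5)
Your proof is correct. The paper itself gives no argument for this lemma; it simply cites Freyd--Heller, whose proof (L4 on p.~99 of \cite{FH}) is purely algebraic, working with the normal form of words in the infinite presentation $\langle a_0,a_1,\dots \mid a_i^{-1}a_ja_i=a_{j+1},\ i<j\rangle$ and manipulating relators to push a nontrivial element into the stated shape. You instead pass to the piecewise-linear model on $[0,1]$ and argue dynamically: read off the exponent $n$ from the germ of $g$ at the left endpoint of its support (the slope $2^{-n}$ at a fixed endpoint, with $n\ne 0$ exactly when $g$ is not locally trivial there), strip it off to get an element trivial near $0$, and conjugate by a high power of the contracting generator $a_0$ to push the remainder into $s(F)=\mathrm{Stab}([0,1/2])$; transitivity of $F$ on dyadics plus one application of $s$ handles the case where the support starts in the interior, yielding the $i=1$ form. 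The steps you flag as delicate are exactly the right ones and they all go through: a germ of slope $1$ at a fixed endpoint of a PL map is the germ of the identity, $a_0^k([0,1/2])=[0,2^{-k-1}]$ eventually lands inside the interval of triviality, and $s^{i+1}(F)$ is indeed the pointwise stabilizer of $[0,1-2^{-(i+1)}]$. The trade-off is that your route leans on the standard (but nontrivial) identification of the abstract presentation with the PL model, whereas Freyd--Heller's argument stays inside the presentation; in exchange your argument makes the dichotomy $i=0$ versus $i=1$ and the meaning of the exponent $n$ geometrically transparent.
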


\begin{Lemma}[L2 of Freyd-Heller \cite{FH}]\label{L2Heller-Freyd}
Suppose $f:G\to G$ is an endomorphism of a group $G$ such that
$f^2(x)=x_0^{-1}\cdot f(x)\cdot x_0$ for some $x_0$ and all $x\in G$.
If $x_0=f(y)$ for some $y\in G$, then 
$f$ is conjugate to $g:G\to G$ such that $g^2=g$.
\end{Lemma}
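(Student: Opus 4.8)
The plan is to produce the idempotent $g$ by hand, as the conjugate of $f$ by the element $y$, and then to check $g^2=g$ by a one-line computation; there is no serious obstacle here, only the need to pick the right conjugating element and to keep the conventions straight.

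First I would fix notation: for $a\in G$ let $\iota_a$ denote the inner automorphism $\iota_a(x)=axa^{-1}$, and recall that ``$f$ is conjugate to $g$'' means $g=\iota_c\circ f$ for some $c\in G$ --- that is, $g(x)=cf(x)c^{-1}$ for all $x$ --- which is precisely the indeterminacy in the $\pi_1$-endomorphism induced by an unbased self-map. I would then record the two elementary identities that do all the work: for any endomorphism $f$ and any $a\in G$ one has $f\circ\iota_a=\iota_{f(a)}\circ f$ (apply $f$ to $axa^{-1}$) and $\iota_a\circ\iota_b=\iota_{ab}$; and I would rewrite the hypothesis $f^2(x)=x_0^{-1}f(x)x_0$ as the operator identity $f^2=\iota_{x_0^{-1}}\circ f$.

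Next I would set $g:=\iota_y\circ f$, i.e. $g(x)=yf(x)y^{-1}$; this is an endomorphism of $G$ and it is conjugate to $f$ by construction, so only $g^2=g$ remains. Here the computation is
\[
g^2=\iota_y\circ f\circ\iota_y\circ f=\iota_y\circ\iota_{f(y)}\circ f^2=\iota_{yf(y)}\circ\iota_{x_0^{-1}}\circ f=\iota_{yf(y)x_0^{-1}}\circ f ,
\]
and since $f(y)=x_0$ the element $yf(y)x_0^{-1}$ collapses to $y$, giving $g^2=\iota_y\circ f=g$. (Unwound pointwise: $g^2(x)=yf(y)\,x_0^{-1}f(x)x_0\,f(y)^{-1}y^{-1}=yf(x)y^{-1}$.) As a by-product one notices $f(x_0)=f^2(y)=x_0^{-1}f(y)x_0=x_0$, which is why the variant $g=f\circ\iota_y=\iota_{x_0}\circ f$ works equally well, but $g=\iota_y\circ f$ is the cleanest to present.

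The only genuine idea in the argument is recognizing that a $y$ with $f(y)=x_0$ is exactly what is needed for the conjugating elements to telescope --- conjugating $f$ by $y$ absorbs the ``defect'' $x_0$ in a way that cancels under iteration --- so once $g$ is written down the verification is forced. The one place I would be careful is the bookkeeping of left- versus right-conjugation: making sure the rewritten hypothesis $f^2=\iota_{x_0^{-1}}\circ f$ matches the stated one, and that the resulting $g$ really lies in the conjugacy class of $f$ in the sense used elsewhere in the paper.
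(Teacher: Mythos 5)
Your proof is correct and is essentially the paper's own argument: both define $g(x)=y\,f(x)\,y^{-1}$ and verify $g^2=g$ by a direct computation using $f(y)=x_0$ and the hypothesis $f^2(x)=x_0^{-1}f(x)x_0$; your inner-automorphism bookkeeping is just a cleaner packaging of the same pointwise calculation. No gaps.
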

\begin{proof}
Let $g(x)=y\cdot f(x)\cdot y^{-1}$ for all $x\in G$.
Now, $g^2(x)=g(y)\cdot g(f(x))\cdot g(y^{-1})=
g(y)\cdot y\cdot f^2(x)\cdot y^{-1}\cdot g(y^{-1})=
y\cdot f(y)\cdot x_0^{-1}\cdot f(x)\cdot x_0\cdot f(y^{-1})\cdot y^{-1}=
y\cdot f(x)\cdot y^{-1}=g(x)$.
\end{proof}

\begin{Lemma}\label{GeneralConjugation}
Suppose $f:G\to G$ is an endomorphism of a group $G$ such that
$f^2(x)=x_0^{-1}\cdot f(x)\cdot x_0$ for some $x_0$ and all $x\in G$.
Let $x_i=f^i(x_0)$ for $i > 0$.
If $m > i$ then
$$x_i^{-1}\cdot f^m(x)\cdot x_i=f^{m+i+1}(x)$$
for all $x\in G$.
\end{Lemma}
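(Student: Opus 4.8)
The hypothesis says that $f^2$ is conjugation by $x_0$, i.e. $f^2 = c_{x_0}\circ f$ where $c_{x_0}(x) = x_0^{-1}\cdot x\cdot x_0$. The plan is to prove the identity
$x_i^{-1}\cdot f^m(x)\cdot x_i = f^{m+i+1}(x)$ (valid for $m > i$) by induction on $i$, starting from the case $i=0$, which is exactly the hypothesis $x_0^{-1}\cdot f^m(x)\cdot x_0 = f^{m+1}(x)$ — note this already holds for all $m\ge 0$, in particular for all $m > 0 = i$.

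For the inductive step, suppose the claim holds for $i-1$, that is $x_{i-1}^{-1}\cdot f^m(x)\cdot x_{i-1} = f^{m+i}(x)$ whenever $m > i-1$. The key observation is that $x_i = f^i(x_0) = f(f^{i-1}(x_0)) = f(x_{i-1})$, so applying $f$ to the inductive identity and using that $f$ is a homomorphism gives
$x_i^{-1}\cdot f^{m+1}(x)\cdot x_i = f(f^{m+i}(x)) = f^{m+i+1}(x)$
for all $m > i-1$. Re-indexing by replacing $m+1$ with $m$, this reads $x_i^{-1}\cdot f^m(x)\cdot x_i = f^{m+i}(x)$ for all $m > i$. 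Wait — that produces $f^{m+i}$, not $f^{m+i+1}$; the gain of one extra power must come from somewhere else, so the step needs to also invoke the base identity once more. Concretely, after pushing the inductive hypothesis through $f$ one should land on an expression of the form $f^{m+i}(x)$ with the conjugator $x_i$, and then apply the $i=0$ case (conjugation by $x_0$ advances the exponent by one) in a compatible way; the bookkeeping of which conjugator sits where is the only delicate point.

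The main obstacle I anticipate is precisely this exponent bookkeeping: making sure that when I apply $f$ to a conjugation identity I track that $f(x_{i-1}) = x_i$ and that the resulting power is what is claimed, and verifying that the constraint $m > i$ (rather than $m\ge i$) is exactly what is needed for the induction to close without circularity — in particular that at each stage the hypothesis is only invoked in its allowed range. Everything else is a routine two-line manipulation using that $f$ is a homomorphism and that conjugation commutes with homomorphisms, $f(a^{-1}\cdot b\cdot a) = f(a)^{-1}\cdot f(b)\cdot f(a)$.
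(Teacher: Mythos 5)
Your base case is fine, but the gap you flagged in the inductive step is not a bookkeeping issue that one extra appeal to the base identity can repair --- it is fatal to the formula as stated. Applying $f$ to a conjugation identity replaces the conjugator $x_{i-1}$ by $x_i$ but leaves unchanged the \emph{amount} by which the exponent advances: from $x_{i-1}^{-1}\cdot f^{m}(x)\cdot x_{i-1}=f^{m+1}(x)$ you get $x_i^{-1}\cdot f^{m+1}(x)\cdot x_i=f^{m+2}(x)$, so conjugation by $x_i$ still advances the exponent by exactly one. Trying to recover the missing powers by invoking the $i=0$ case afterwards changes the conjugator from $x_i$ to $x_i\cdot x_0$ and therefore proves a different identity. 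What your induction actually establishes is
$$x_i^{-1}\cdot f^m(x)\cdot x_i=f^{m+1}(x)\qquad (m>i),$$
the familiar Thompson relation $a_i^{-1}a_ja_i=a_{j+1}$; the exponent $m+i+1$ in the statement is not provable because it already fails for $i=1$. (Take $G=F$, $f=s$ the shift, $x_0=a_0$, so that $s^2(x)=a_0^{-1}\cdot s(x)\cdot a_0$ and $x_i=a_i$; then with $m=2$, $x_1^{-1}\cdot s^2(a_0)\cdot x_1=a_1^{-1}a_2a_1=a_3$, whereas $s^{2+1+1}(a_0)=a_4\ne a_3$.)

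So your instinct that the induction does not close was correct; the resolution is that the statement, not your argument, must change. The printed proof contains exactly the off-by-one slip you detected: its displayed inductive conclusion has exponent $(m+1)+i+1$ on the right, whereas the statement at stage $i+1$ would require $(m+1)+(i+1)+1$. With the corrected exponent $f^{m+1}$, Corollary \ref{SuperConjugation} should read $x_i^{-k}\cdot f^m(x)\cdot x_i^k=f^{m+k}(x)$, and the proof of \ref{BasicIdempotentResult} should take $n=k$ with $k>i$ (replacing $k$ by a suitable multiple if necessary) rather than $n=k\cdot(i+1)$ before invoking \ref{L2Heller-Freyd}.
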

\begin{proof}
Let us proceed by induction on $i$. 

If $i=0$, then
$$x_0^{-1}\cdot f^m(x)\cdot x_0=x_0^{-1}\cdot f(f^{m-1}(x))\cdot x_0=f^{m+1}(x).$$
If 
$$x_i^{-1}\cdot f^m(x)\cdot x_i=f^{m+i+1}(x)$$
then by applying $f$ to both sides we get the inductive step
$$x_{i+1}^{-1}\cdot f^{m+1}(x)\cdot x_{i+1}=f^{m+1+i+1}(x).$$
\end{proof}

\begin{Corollary}\label{SuperConjugation}
Suppose $f:G\to G$ is an endomorphism of a group $G$ such that
$f^2(x)=x_0^{-1}\cdot f(x)\cdot x_0$ for some $x_0$ and all $x\in G$.
Let $x_i=f^i(x_0)$ for $i > 0$.
If $m > i$ then 
$$x_i^{-k}\cdot f^m(x)\cdot x_i^k=f^{m+(i+1)\cdot k}(x)$$
for all $x\in G$ and all $k\ge 1$.
\end{Corollary}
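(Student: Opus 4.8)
The plan is a straightforward induction on $k$, using Lemma \ref{GeneralConjugation} as the engine. The base case $k=1$ is literally the statement of Lemma \ref{GeneralConjugation}: since $m > i$, we have $x_i^{-1}\cdot f^m(x)\cdot x_i = f^{m+i+1}(x) = f^{m+(i+1)\cdot 1}(x)$ for all $x\in G$.

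For the inductive step, assume the identity holds for some $k\ge 1$, i.e. $x_i^{-k}\cdot f^m(x)\cdot x_i^k = f^{m+(i+1)k}(x)$ for all $x\in G$. I would then write
$$x_i^{-(k+1)}\cdot f^m(x)\cdot x_i^{k+1} = x_i^{-1}\cdot\bigl(x_i^{-k}\cdot f^m(x)\cdot x_i^{k}\bigr)\cdot x_i = x_i^{-1}\cdot f^{m+(i+1)k}(x)\cdot x_i.$$
Now I want to apply Lemma \ref{GeneralConjugation} once more, with the exponent $m$ there replaced by $m' := m+(i+1)k$ and with $x=x$. The hypothesis of that lemma requires $m' > i$; this is automatic because $m > i$ and $(i+1)k \ge 0$, so no genuine obstacle arises here. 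Lemma \ref{GeneralConjugation} then gives $x_i^{-1}\cdot f^{m'}(x)\cdot x_i = f^{m'+i+1}(x)$, and substituting $m' = m+(i+1)k$ yields $f^{m+(i+1)k+i+1}(x) = f^{m+(i+1)(k+1)}(x)$, completing the induction.

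The only point deserving care — and it is minor — is keeping the inequality $m > i$ available at every stage of the induction; since the exponent only increases as $k$ grows, the condition $m+(i+1)k > i$ needed to reapply Lemma \ref{GeneralConjugation} is never in danger. There is no substantive difficulty: the corollary is purely a bookkeeping consequence of the single-conjugation identity already established.
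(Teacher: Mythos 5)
Your proof is correct and follows exactly the route the paper takes: the base case $k=1$ is Lemma \ref{GeneralConjugation}, and the inductive step conjugates once more by $x_i$ and reapplies that lemma with the exponent $m+(i+1)k$, noting that the hypothesis $m>i$ persists. Nothing further is needed.
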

\begin{proof}
$k=1$ is done in \ref{GeneralConjugation}. Use induction and \ref{GeneralConjugation} by conjugating with $x_i$.

\end{proof}

\begin{Corollary}\label{BasicIdempotentResult}
Suppose $f:G\to G$ is an endomorphism of a group $G$ such that
$f^2(x)=x_0^{-1}\cdot f(x)\cdot x_0$ for some $x_0$ and all $x\in G$.
Let $x_i=f^i(x_0)$ for $i > 0$.
If there is $k\ge 1$ and $i\ge 0$ such that
$$x_i^k\in im(f^{i+1})$$
then some power of $f$ is conjugate to $g:G\to G$ satisfying $g^2=g$.
\end{Corollary}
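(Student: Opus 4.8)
The plan is to apply Lemma \ref{L2Heller-Freyd} not to $f$ itself but to the power $F:=f^{(i+1)k}$, taking $x_i^k$ as the relevant conjugating element and checking that it lies in $\operatorname{im}(F)$.

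First I would extract from Corollary \ref{SuperConjugation} --- applied with index $i$, power $1$, and $m=i+1>i$ --- the identity $x_i^{-1}\cdot f^{i+1}(x)\cdot x_i=f^{2(i+1)}(x)$, valid for all $x\in G$. Writing $x_i^k=f^{i+1}(y)$ and substituting $x=y$, the left-hand side collapses to $x_i^k$ because $x_i$ commutes with its own power $x_i^k$; hence
$$x_i^k=f^{2(i+1)}(y)=f^{i+1}\bigl(f^{i+1}(y)\bigr)=f^{i+1}(x_i^k).$$
So $f^{i+1}$ fixes the element $x_i^k$, and therefore $F(x_i^k)=\bigl(f^{i+1}\bigr)^k(x_i^k)=x_i^k$; in particular $x_i^k\in\operatorname{im}(F)$.

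Next I would check that $F$ satisfies the hypothesis of Lemma \ref{L2Heller-Freyd} with ``$x_0$'' replaced by $z_0:=x_i^k$. Another application of Corollary \ref{SuperConjugation}, this time with index $i$, power $k$, and $m=(i+1)k>i$, gives
$$(x_i^k)^{-1}\cdot F(x)\cdot x_i^k=f^{(i+1)k+(i+1)k}(x)=f^{2(i+1)k}(x)=F^2(x)$$
for every $x\in G$, that is $F^2(x)=z_0^{-1}\cdot F(x)\cdot z_0$. Since $z_0=F(z_0)$ by the previous step, Lemma \ref{L2Heller-Freyd} applies to the endomorphism $F$ (with its ``$y$'' taken to be $z_0$) and yields that $F=f^{(i+1)k}$ is conjugate to a map $g:G\to G$ with $g^2=g$. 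As $(i+1)k\ge 1$, this is exactly the conclusion. (When $i=0$ one reads $x_0$ as the given element and $f^0$ as the identity, and the argument is unchanged.)

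The two invocations of Corollary \ref{SuperConjugation} are routine; the only thing to get right is the choice of exponent and conjugating element. One must notice that $(i+1)k$ is the power of $f$ for which conjugation by $x_i^k$ is exactly squaring, and that feeding the exponent $1$ --- rather than the ``expected'' $k$ --- into Corollary \ref{SuperConjugation} is what produces the fixed-point identity $f^{i+1}(x_i^k)=x_i^k$, which is what places $z_0$ in $\operatorname{im}(F)$.
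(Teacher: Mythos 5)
Your proposal is correct and is essentially the paper's own proof: both take $n=(i+1)k$, use Corollary \ref{SuperConjugation} to obtain $x_i^{-k}\cdot f^n(x)\cdot x_i^k=f^{2n}(x)$, and finish by applying Lemma \ref{L2Heller-Freyd} to $f^n$ with conjugating element $x_i^k$. The only (cosmetic) difference is how $x_i^k\in im(f^n)$ is verified: the paper repeatedly conjugates the membership $x_i^k\in im(f^{i+1})$ to raise the exponent, while you derive the slightly stronger fixed-point identity $f^{i+1}(x_i^k)=x_i^k$.
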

\begin{proof}
By \ref{SuperConjugation} (conjugate both sides with $x_i$ several times)
$$x_i^k\in im(f^{i+m})$$
for all $m\ge 1$. By applying $f$ to both sides we may assume $i\ge 1$.
Now, put $n=k\cdot (i+1)$ to see that
$$x_i^{-k}\cdot f^n(x)\cdot x_i^k=f^{2n}(x)$$
By \ref{L2Heller-Freyd}, $f^{n}$ is conjugate to $g$ such that $g^2=g$.
\end{proof}

\begin{Lemma}[Freyd-Heller \cite{FH}]\label{T1Heller-Freyd}
Suppose $f:G\to G$ is an endomorphism of a group $G$ such that
$f^2(x)=x_0^{-1}\cdot f(x)\cdot x_0$ for some $x_0$ and all $x\in G$.
If the canonical homomorphism $e:F\to G$ from the Thompson group $F$ is not a monomorphism, then there is $n\ge 1$ such that
$f^n$ is conjugate to $g:G\to G$ such that $g^2=g$.
\end{Lemma}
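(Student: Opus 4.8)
The plan is to reduce the statement to Corollary \ref{BasicIdempotentResult}: it suffices to produce $k \ge 1$ and $i \ge 0$ with $x_i^k \in \im(f^{i+1})$, where $x_i = f^i(x_0)$. To get our hands on such a relation, I would exploit the failure of $e:F \to G$ to be injective together with the structural description of the kernel supplied by Lemma \ref{L4Heller-Freyd}. Concretely, if $e$ is not a monomorphism, pick a non-trivial $w \in \ker(e)$. By Lemma \ref{L4Heller-Freyd}, $w$ is conjugate in $F$ to an element of the form $a_i^n \cdot s^{i+1}(b)$ with $n \ne 0$ and $i \in \{0,1\}$; since conjugates of kernel elements are in the kernel, we may assume $w = a_i^n \cdot s^{i+1}(b)$ itself lies in $\ker(e)$. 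Applying $e$ and using $e(a_j) = f^j(x_0) = x_j$ (with $x_0 = e(a_0)$), the relation $e(w) = 1$ becomes
\[
x_i^n \cdot e\bigl(s^{i+1}(b)\bigr) = 1,
\]
so $x_i^n = e\bigl(s^{i+1}(b)\bigr)^{-1}$.

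The key point is now to recognize $e \circ s^{i+1}$ as landing in $\im(f^{i+1})$. Since $s$ is the shift homomorphism on $F$ with $s(a_j) = a_{j+1}$, one checks on generators that $e \circ s = f \circ e$ — indeed $e(s(a_j)) = e(a_{j+1}) = x_{j+1} = f^{j+1}(x_0) = f(x_j) = f(e(a_j))$ — and hence by induction $e \circ s^{i+1} = f^{i+1} \circ e$. (Here one should double-check that $e$ is genuinely a well-defined homomorphism, i.e. that the defining relations $a_j^{-1} a_l a_j = a_{l+1}$ of $F$ go to valid relations in $G$; this is exactly the content of the hypothesis $f^2(x) = x_0^{-1} f(x) x_0$ combined with Corollary \ref{SuperConjugation}, and is the reason $e$ was called "canonical" in the preceding discussion.) Consequently $e\bigl(s^{i+1}(b)\bigr) = f^{i+1}(e(b)) \in \im(f^{i+1})$, and therefore
\[
x_i^{-n} = e\bigl(s^{i+1}(b)\bigr) \in \im(f^{i+1}).
\]
Replacing $n$ by $|n| \ge 1$ if necessary (an element of the image has its inverse in the image, and a power of such an element is again a power of an element of the image, so $x_i^{|n|} \in \im(f^{i+1})$), we obtain precisely the hypothesis of Corollary \ref{BasicIdempotentResult} with $k = |n|$ and $i \in \{0,1\}$. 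That corollary then yields $n' \ge 1$ such that $f^{n'}$ is conjugate to an honest idempotent $g$ with $g^2 = g$, which is the conclusion.

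I expect the main obstacle to be the bookkeeping around the normal-form Lemma \ref{L4Heller-Freyd} and the passage from a general kernel element to one in the stated form: one must be careful that conjugating $w$ within $F$ does not disturb membership in $\ker(e)$ (it does not, since $\ker(e)$ is normal) and that the resulting relation in $G$ really has the exponent $n$ sitting on $x_i$ rather than on some conjugate of it. The verification that $e$ is well defined and intertwines $s$ with $f$ is mechanical but is the conceptual heart: it is what converts the purely combinatorial normal form in $F$ into a statement about the image filtration $\im(f^{i+1})$ in $G$, after which Corollary \ref{BasicIdempotentResult} does the remaining work. A secondary subtlety is ensuring $i \ge 0$ suffices for \ref{BasicIdempotentResult} — it does, as that corollary immediately bumps $i$ up by applying $f$ — so the restriction $i \in \{0,1\}$ from \ref{L4Heller-Freyd} is more than enough.
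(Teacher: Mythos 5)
Your proposal is correct and follows essentially the same route as the paper's (very terse) proof: extract a kernel element of the form $a_i^n\cdot s^{i+1}(b)$ via Lemma \ref{L4Heller-Freyd} and normality of $\ker(e)$, translate it through $e$ using the intertwining $e\circ s=f\circ e$ into the relation $x_i^{|n|}\in \im(f^{i+1})$, and invoke Corollary \ref{BasicIdempotentResult}. The paper leaves all of these intermediate verifications implicit, so your write-up is simply a fleshed-out version of the intended argument.
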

\begin{proof}
By \ref{L4Heller-Freyd}, the kernel of $e$  contains an element of the form
$a_i^n\cdot s^{i+1}(b)$ for some $n\ne 0$ and $i=0,1$.
Use \ref{BasicIdempotentResult}.
\end{proof}

\section{Unpointed homotopy idempotents}
In this section we give a geometric condition for an unpointed homotopy idempotent to split and an algebraic obstruction for it to split. Moreover, we can see how they interact.

\begin{Lemma}\label{SufficientConditionForSplitting}
Suppose $f:K\to K$ is a pointed map and $H:K\times I\to K$ is a homotopy from $f$ to $f^2$
in which the base point traverses a loop $\omega$.
If there is a loop $\alpha$ such that $[\omega]=[f\circ \alpha]$ in $\pi_1(K)$, then $f$ splits in the unpointed homotopy category of CW complexes.
\end{Lemma}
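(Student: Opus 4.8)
The plan is to remove the obstruction caused by the loop $\omega$ by replacing $f$, within its \emph{unpointed} homotopy class, by a pointed map $f'$ that admits a genuinely pointed homotopy to its own square; then Corollary \ref{PointedIdempotents} applied to $f'$ yields a splitting of $f'$, hence of $f$. For this I would first set up the standard ``dragging'' (right) action of $\pi_1(K,x_0)$ on the monoid $[(K,x_0),(K,x_0)]_*$ of pointed homotopy classes of self-maps: given a pointed map $g$ and a loop $\gamma$ at $x_0$, use the homotopy extension property of the CW pair $\bigl(K\times I,\ (K\times\{0\})\cup(\{x_0\}\times I)\bigr)$ to extend the map which equals $g$ on $K\times\{0\}$ and $\gamma$ on $\{x_0\}\times I$ to a homotopy $G\colon K\times I\to K$, and set $g\cdot\gamma:=G_1$. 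Then: (i) $g\cdot\gamma$ is a pointed map whose class depends only on $[g]$ and $[\gamma]$, and this is a right action; (ii) two pointed self-maps are unpointed-homotopic iff they lie in one orbit (the basepoint of a free homotopy between pointed maps traces a loop); (iii) the given homotopy $H$ says precisely that $f^2=f\cdot\omega$; and (iv) for pointed $g,h$ one has the composition identities
\[
(g\cdot\gamma)\circ h=(g\circ h)\cdot\gamma,\qquad g\circ(h\cdot\gamma)=(g\circ h)\cdot[g\circ\gamma],
\]
together with the change-of-basepoint identity $[f'\circ\gamma]=[\gamma]^{-1}\,f_*[\gamma]\,[\gamma]$ whenever $f'=f\cdot\gamma$.

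Next I would do the computation. Fix a loop $\gamma$ and put $f':=f\cdot\gamma$, a pointed map freely homotopic to $f$. Using the identities in (iv) together with $f^2=f\cdot\omega$ and the right-action rule $(X\cdot a)\cdot b=X\cdot(ab)$,
\[
(f')^2=f'\circ(f\cdot\gamma)=\bigl((f\cdot\gamma)\circ f\bigr)\cdot[f'\circ\gamma]=(f^2\cdot\gamma)\cdot[f'\circ\gamma]=f\cdot\bigl([\omega]\,[\gamma]\,[f'\circ\gamma]\bigr),
\]
while $f'=f\cdot[\gamma]$. Substituting $[f'\circ\gamma]=[\gamma]^{-1}f_*[\gamma][\gamma]$ the bracketed word collapses to $[\omega]\,f_*[\gamma]\,[\gamma]$, so $(f')^2$ is pointed-homotopic to $f'$ as soon as $[\omega]\,f_*[\gamma]=1$, i.e. $f_*[\gamma]=[\omega]^{-1}$. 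The hypothesis furnishes a loop $\alpha$ with $f_*[\alpha]=[\omega]$, so $[\gamma]:=[\alpha]^{-1}$ does the job (orientation conventions may interchange $\alpha$ and its reverse, but in any case a suitable $\gamma$ exists precisely because $[\omega]\in f_*\pi_1(K)$).

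Finally I would conclude. By a cellular approximation rel $x_0$ (permissible since $\{x_0\}$ is a subcomplex) we may assume $f'$ is cellular; reversing the pointed homotopy just obtained gives a pointed homotopy from $f'$ to $(f')^2$, so Corollary \ref{PointedIdempotents} applies to $f'$ and shows that $f'$ splits, in particular in the unpointed homotopy category. Since $f$ and $f'$ are unpointed-homotopic and splitting is manifestly invariant under unpointed homotopy of the idempotent (if $f\sim f'\sim u\circ d$ with $d\circ u\sim\mathrm{id}$, then $f\sim u\circ d$ as well), it follows that $f$ splits in the unpointed homotopy category of CW complexes.

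The main obstacle is not conceptual but bookkeeping: one must define the dragging action and verify (i)--(iv) with fully consistent conventions --- left versus right action, the direction of concatenation, and which way the change-of-basepoint conjugation runs --- so that the word $[\omega]\,[\gamma]\,[f'\circ\gamma]$ in the second step really does collapse to $[\gamma]$ under the stated choice of $\gamma$. These are all routine ``move the basepoint along a loop'' arguments, but they have to be aligned; once they are, everything else in the argument is formal.
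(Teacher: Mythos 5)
Your proof is correct and is essentially the paper's argument in different notation: the paper likewise drags $f$ along $\alpha^{-1}$ to a map $g$ and splices the four homotopies $G^{-1}$, $H$, $f\circ G$, $G\circ(g\times\mathrm{id})$ --- exactly the homotopies realizing your action identities --- to see that the basepoint traces $\alpha\ast\omega\ast(f\circ\alpha)^{-1}\ast\alpha^{-1}$, which is null by hypothesis, and then invokes Corollary \ref{PointedIdempotents}. Your ``dragging action'' bookkeeping is just a systematic packaging of the same computation.
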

\begin{proof}
Let $g:K\to K$ be a map such that for some homotopy $G:K\times I\to K$ from $f$ to $g$, the base point traverses $\alpha^{-1}$, the reverse of $\alpha$. If we splice four homotopies:
$G^{-1}$, $H$, $(f\times id)\circ G$, and $G\circ (g\times id)$, we get a homotopy from $g$ to $g^2$ in which the base point traverses the concatenation of
the following loops: $\alpha$, $\omega$, $(f\circ \alpha)^{-1}$, and $\alpha^{-1}$. That concatentation is homotopically trivial, so $g^2$ is homotopic to $g$ in the pointed homotopy category of connected CW complexes.
By \ref{PointedIdempotents}, $g$ splits in the pointed homotopy category, so $f$ splits in the unpointed homotopy category.
\end{proof}

\begin{Lemma}\label{AlgebraicObstruction}
Suppose $f:K\to K$ is an unpointed homotopy idempotent on a connected CW complex. If $f$ does not split, then the induced homomorphism $F\to \pi_1(K)$ of the Thompson group $F$ to $\pi_1(K)$ is a monomorphism.
\end{Lemma}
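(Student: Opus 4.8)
The plan is to prove the contrapositive: assuming that the canonical homomorphism $e\colon F\to\pi_1(K)$ is \emph{not} a monomorphism, I will show that $f$ splits. First I would make the standard reductions: replace $f$ by a homotopic pointed cellular map, write $G=\pi_1(K)$ and $\phi=f_*$, fix a homotopy $H\colon f\simeq f^2$ during which the base point traverses a loop $\omega$, and put $x_0=[\omega]$ (reversing $\omega$ if necessary, so that $\phi^2(x)=x_0^{-1}\phi(x)x_0$ for all $x$, matching Lemma~\ref{L2Heller-Freyd}). Then $e(a_j)=x_j:=\phi^j(x_0)=[f^j\circ\omega]$ and $e\circ s=\phi\circ e$. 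Finally, since $f^2\simeq f$ (unpointed) forces $f^m\simeq f$ (unpointed) for every $m\ge 1$, and since "splitting" is invariant under unpointed homotopy, it suffices to show that $f^n$ splits for one convenient power $n$.

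The algebraic input comes next. Since $e$ is not injective, $\ker e$ is a nontrivial normal subgroup, so by Lemma~\ref{L4Heller-Freyd} (applied to any nontrivial element and then using normality) it contains an element $a_i^m\cdot s^{i+1}(b)$ with $m\ne 0$, $i\in\{0,1\}$, $b\in F$. Applying $e$ and $e\circ s=\phi\circ e$ gives $x_i^{m}\cdot\phi^{i+1}(e(b))=1$, hence $x_i^{|m|}\in\operatorname{im}(\phi^{i+1})$. Write $k=|m|\ge 1$ and $n=k(i+1)$. Conjugating $x_i^{k}=\phi^{i+1}(z)$ by suitable powers of $x_i$ exactly as in the first lines of the proof of Corollary~\ref{BasicIdempotentResult} (i.e.\ via Corollary~\ref{SuperConjugation}) upgrades this to $x_i^{k}\in\operatorname{im}(\phi^{n})$.

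The geometric heart of the argument is to realize a homotopy $H_n\colon f^n\simeq (f^n)^2=f^{2n}$ whose base-point loop is visibly in $\operatorname{im}((f^n)_*)=\operatorname{im}(\phi^n)$. The building blocks are the homotopies $f^{j}\circ H\circ(f^{p}\times\mathrm{id})$: for $j\ge 0$ and $p\ge 0$ this is a homotopy from $f^{j+1+p}$ to $f^{j+2+p}$ during which the base point traverses a loop of class $\phi^j(x_0)=x_j$. I would concatenate $n$ of these: the first $k$ elementary steps (raising the exponent from $n$ to $n+k$) chosen to contribute the class $x_i$ each, and the remaining $n-k=ki$ steps (raising the exponent from $n+k$ to $2n$) chosen to contribute the class $x_n$ each. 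One checks that the constraint making $x_j$ available at a given stage — that $j$ is strictly less than the current exponent of $f$ — holds throughout, because $n\ge i+1$. Then the base point of $H_n$ traverses a loop $\omega_n$ with $[\omega_n]=x_i^{k}\cdot x_n^{ki}$; since $x_i^{k}\in\operatorname{im}(\phi^n)$ and $x_n=\phi^n(x_0)\in\operatorname{im}(\phi^n)$, we conclude $[\omega_n]\in\operatorname{im}((f^n)_*)$.

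To finish, I would apply Lemma~\ref{SufficientConditionForSplitting} to the pointed cellular map $f^n$, the homotopy $H_n$, and a loop $\alpha$ with $[\omega_n]=[f^n\circ\alpha]$ (which exists by the previous paragraph): it yields that $f^n$ splits in the unpointed homotopy category, whence so does $f$, completing the contrapositive. The hard part will be exactly the construction in the preceding paragraph — organizing the elementary homotopies $f^{j}\circ H\circ(f^{p}\times\mathrm{id})$ so that, for the judiciously chosen power $n=k(i+1)$, the accumulated base-point loop lands in $\operatorname{im}((f^n)_*)$; the bookkeeping of which classes $x_j$ are realizable at which exponent, and matching the count ($k$ copies of $x_i$ and $ki$ copies of $x_n$) to the algebra of Corollary~\ref{SuperConjugation}, is where the care lies. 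A minor secondary point needing a line is the invariance of splitting under unpointed homotopy, used to pass from $f^n$ back to $f$.
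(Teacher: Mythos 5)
Your proposal is correct, but it takes a genuinely different route from the paper. The paper argues by contradiction through the infinite telescope: using \ref{T1Heller-Freyd} (hence \ref{L4Heller-Freyd}, \ref{SuperConjugation} and \ref{L2Heller-Freyd}) it replaces a power of $f$ by a homotopic map $h$ whose induced endomorphism of $\pi_1(K)$ is an honest idempotent, observes that the resulting self-map $g=d\circ u$ of $Tel(h)$ is then surjective on $\pi_1(Tel(h))$ so that the hypothesis of \ref{SufficientConditionForSplitting} is satisfied for free, and transfers the splitting of $g$ back to $f$ via \ref{FirstGeometricObstruction}. You instead stay on $K$ itself: from \ref{L4Heller-Freyd} and the conjugation identities you extract only the raw algebraic conclusion $x_i^{k}\in\operatorname{im}(\phi^{n})$, and then do the real work of \emph{geometrically realizing} the conjugation algebra by splicing the elementary homotopies $f^{j}\circ H\circ(f^{p}\times\mathrm{id})$ into a homotopy $f^n\simeq f^{2n}$ whose base-point track has class $x_i^{k}x_n^{ki}\in\operatorname{im}(\phi^n)$, so that \ref{SufficientConditionForSplitting} applies directly to $f^n$ (and $f\simeq f^n$ unpointed finishes it). Your bookkeeping checks out: the constraint $j\le N-1$ for inserting $x_j$ at exponent $N$ holds in both phases since $n=k(i+1)\ge i+1$ and $k\ge1$, and the step count $n-k=ki$ matches. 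What the paper's route buys is that, once the telescope machinery of Section \ref{Infinite telescope} is in place, no explicit homotopy construction is needed; what your route buys is independence from the telescope (and from \ref{FirstGeometricObstruction} and \ref{L2Heller-Freyd}), at the price of the hands-on splicing argument — in effect you re-prove a geometric version of \ref{BasicIdempotentResult} rather than citing its group-theoretic form. One small caution: the exponent bookkeeping in the paper's \ref{GeneralConjugation}/\ref{SuperConjugation} as printed ($f^{m+i+1}$, $f^{m+(i+1)k}$) does not survive the stated induction (conjugation by $x_i$ raises the exponent by exactly $1$, so the correct identities read $f^{m+1}$ and $f^{m+k}$); your construction is consistent with the corrected identities, since you only use $x_i^k\in\operatorname{im}(\phi^N)$ for all $N\ge i+1$ and the count $k+ki=n$, but you should verify your choice of $n$ against the corrected algebra rather than against the printed statements.
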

\begin{proof}
Suppose the induced homomorphism $F\to \pi_1(K)$ of the Thompson group $F$ to $\pi_1(K)$ is not a monomorphism. By \ref{T1Heller-Freyd}, $f$ is homotopic to $h$ such that
$\pi_1(h^2)=\pi_1(h)$.
Let $g=d\circ u:Tel(h)\to Tel(h)$ as in \ref{MapsRelTel}. It is a homotopy idempotent that does not split by \ref{FirstGeometricObstruction}.
Notice $\pi_1(g)$ is surjective if $\pi_1(h^2)=\pi_1(h)$, so by \ref{SufficientConditionForSplitting},
$g$ splits, a contradiction.

\end{proof}

\section{Homotopy idempotents on complexes of finite dimension}

Hastings and Heller \cite{HH}  proved that homotopy idempotents on CW complexes $K$ of finite dimension split (the 2 dimensional case of it was proven earlier by Dydak-Hastings \cite{DH}). Both papers are really considering only the case of the natural homomorphism from the Thompson
group $F$ to $\pi_1(K)$ being a monomorphism. In this section we present a proof in the general case that also simplifies the proof in \cite{HH}.

\begin{Lemma}\label{DimensionLemma}
Let $K$ be a connected CW complex and $\pi_1(K)=\mathbb{Z}^n$ for some $n\ge 1$. 

If $\pi_1(K)$ acts trivially on an element $x\ne 0$ of the highest non-trivial homology group $H_r(\tilde K)\ne 0$ of the universal cover $\tilde K$ of $K$, then $\dim(K)\ge n+r$.
\end{Lemma}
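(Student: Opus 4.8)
The plan is to use the Cartan–Leray spectral sequence (equivalently, the Borel construction) for the action of $G = \pi_1(K) = \ZZ^n$ on the universal cover $\widetilde K$, which computes $H_*(K)$ from $H_*(G; H_*(\widetilde K))$. Since $\widetilde K$ has $H_r(\widetilde K)$ as its top nonzero homology group, the spectral sequence $E^2_{p,q} = H_p(G; H_q(\widetilde K)) \Rightarrow H_{p+q}(K)$ is concentrated in the strip $0 \le q \le r$. The key input is that $\ZZ^n$ has cohomological (and homological) dimension exactly $n$: the group homology $H_p(\ZZ^n; -)$ vanishes for $p > n$, and for the trivial module $\ZZ$ we have $H_n(\ZZ^n;\ZZ) \cong \ZZ \ne 0$. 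So I would aim to show the class $x$ detects a nonzero element of $H_{n+r}(K)$, forcing $\dim K \ge n+r$.

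First I would set $q = r$, the top row. By hypothesis $G$ acts trivially on the cyclic (or at least nonzero) submodule generated by $x \ne 0$ in $H_r(\widetilde K)$; pick a summand or quotient of $H_r(\widetilde K)$ on which $x$ survives and the action is trivial, so that $E^2_{n,r} = H_n(\ZZ^n; H_r(\widetilde K))$ receives a nonzero contribution from $x$ via $H_n(\ZZ^n;\ZZ) \otimes \langle x\rangle$ together with the universal coefficient / Künneth description of $H_n(\ZZ^n; M)$ for trivial $M$. Next I would check that $E^2_{n,r}$ is a permanent cycle and is not hit by any differential: differentials $d^s\colon E^s_{n,r} \to E^s_{n-s, r+s-1}$ land in the row $q = r+s-1 > r$, which is zero since $H_{>r}(\widetilde K) = 0$; and differentials into $E^s_{n,r}$ come from $E^s_{n+s, r-s+1}$, which sits in column $p = n+s > n$, hence is zero since $H_{>n}(\ZZ^n;-) = 0$. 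Therefore $E^\infty_{n,r} = E^2_{n,r} \ne 0$, and it is a subquotient of $H_{n+r}(K)$, so $H_{n+r}(K) \ne 0$ and thus $\dim K \ge n+r$.

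The main obstacle I anticipate is the module-theoretic bookkeeping at $E^2_{n,r}$: one must be careful that passing to a trivial-action subquotient of $H_r(\widetilde K)$ containing $x$ does not kill the class after taking $H_n(\ZZ^n;-)$, and that there is no interference from the rest of $H_r(\widetilde K)$ (on which the action need not be trivial). The cleanest route is probably to project $H_r(\widetilde K)$ onto its coinvariants $H_r(\widetilde K)_G$, or better, to use that $x$ generates a trivial $\ZZ[\ZZ^n]$-submodule $A \cong \ZZ$ and that $H_n(\ZZ^n; A) \cong A \cong \ZZ$ maps into $H_n(\ZZ^n; H_r(\widetilde K))$; one then needs the inclusion $A \hookrightarrow H_r(\widetilde K)$ to induce something nonzero on $H_n$, which follows because $H_n(\ZZ^n;-)$ applied to a split injection (split at least rationally, or after tensoring with a field where $x$ stays nonzero) stays injective. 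Working with field coefficients $\FF$ for which $x \ne 0$ in $H_r(\widetilde K;\FF)$ sidesteps the splitting issue entirely and still yields $H_{n+r}(K;\FF)\ne 0$, which suffices for the dimension bound.
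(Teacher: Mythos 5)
Your proposal is correct and follows essentially the same route as the paper: the Cartan--Leray spectral sequence $E^2_{p,q}=H_p(\pi_1(K);H_q(\tilde K))\Rightarrow H_{p+q}(K)$, vanishing outside the strip $p\le n$, $q\le r$, and survival of the corner term $E^2_{n,r}$ to $E^\infty$. The module-theoretic point you worry about at $E^2_{n,r}$ is settled most cleanly by noting that $H_n(\ZZ^n;M)\cong M^{\ZZ^n}$ (top of the Koszul resolution, or Poincar\'e duality for the $n$-torus), so the invariant element $x$ itself gives a nonzero class there; the paper simply asserts $E^2_{n,r}\ne 0$ without spelling this out, so your extra care is warranted rather than superfluous.
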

\begin{proof}
Consider the spectral sequence from Section 10.8 of \cite{HW} (pp.464--468)
for $p:\tilde K\to K$, especially how it is used in the proof of Theorem 10.8.7 which is related to our case of $n=1$. Its terms are
$$E^2_{p,q}=H_p(\pi_1(K);H_q(\tilde K))$$
with integral coefficients and it converges to $H_{p+q}(K)$.
Hence, $E^2_{p,q}=0$ if $p > n$ or $q > r$.

If we look at $E^2_{n,r}=H_n(\pi_1(K);H_r(\tilde K))$, then
$E^2_{n,r}=E^\infty_{n,r}=H_n(\pi_1(K);H_r(\tilde K))\ne 0$, so $H_{n+r}(K)\ne 0$ and $\dim(K)\ge n+r$.
\end{proof}

\begin{Question}
The proof of Lemma \ref{DimensionLemma} uses a spectral sequence that is rarely applied in contrast to the Serre spectral sequence. Is there a more elementary tool that can be used there?
\end{Question}

\begin{Theorem}\label{MainResult}
If $f:K\to K$ is an unpointed homotopy idempotent on a finite-dimensional CW complex, then it splits.
\end{Theorem}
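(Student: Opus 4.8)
The plan is to argue by contradiction. Assume $f\colon K\to K$ is a homotopy idempotent that does not split, and set $D:=\dim K<\infty$; we shall bound a dimension from below by something arbitrarily large. Fix a homotopy $H$ joining $f$ and $f^2$; it moves the base point along a loop $\omega$, and writing $x_0=[\omega]$ and $\f=\pi_1(f)$ one has $\f^2(x)=x_0^{-1}\f(x)x_0$, so the canonical homomorphism $e\colon F\to\pi_1(K)$, $e(a_0)=x_0$, $e(a_k)=\f^k(x_0)$, is defined. Since $f$ does not split, Lemma~\ref{AlgebraicObstruction} makes $e$ injective, and Lemma~\ref{FreeAbelianSubgroupsOfF} then exhibits inside $\im(e)$ free abelian subgroups of every finite rank --- namely those generated by the $e(a_{3i}^{-1}a_{3i+1})$.

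Now pass to the telescope $T=Tel(f)$, which has $\dim T\le D+1$. Because $\f$ restricted to $\im(e)$ is conjugate, via $e$, to the injective shift $s$ of $F$, all iterates $\f^{k}$ are injective on $\im(e)$, so the inclusion $K_0\hookrightarrow T$ --- which on $\pi_1$ is the canonical map into $\pi_1(T)=\varinjlim(\pi_1(K)\xrightarrow{\f}\cdots)$ --- is injective on $\im(e)$; hence $\pi_1(T)$ contains a copy of $F$, and therefore a copy of $\ZZ^n$ for every $n$. By Lemma~\ref{UniversalCoverOfTel} the universal cover of $T$ is $Tel(\widetilde f)$ for the base-point preserving, $\f$-equivariant lift $\widetilde f\colon\widetilde K\to\widetilde K$, so $H_q(\widetilde T)=\varinjlim\bigl(H_q(\widetilde K)\xrightarrow{\widetilde f_*}\cdots\bigr)$; in particular $H_q(\widetilde T)=0$ for $q>D$. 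Lifting $H$ gives on integral homology the relation $\widetilde f_*=(x_0)_*\circ\widetilde f_*^{\,2}$, where $(x_0)_*$ is the deck action of $x_0$; consequently $\widetilde f_*^{\,k}=(x_0)_*^{\,1-k}\circ\widetilde f_*$, so $\ker\widetilde f_*^{\,k}=\ker\widetilde f_*$ for all $k\ge 1$, and, using $\f$-equivariance, every class $z$ with $\widetilde f_*(z)=z$ is fixed by $x_0$ (apply the relation to $z$) and then, successively, by every $x_i=\f^i(x_0)=e(a_i)$, hence by all of $\im(e)\cong F$.

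If $\widetilde T$ is acyclic it is contractible, so $T$ is an at most $(D+1)$-dimensional $K(\pi_1(T),1)$ and $\mathrm{cd}\,\pi_1(T)\le D+1$, contradicting $\pi_1(T)\supseteq\ZZ^{D+2}$. Otherwise let $r\le D$ be the highest degree with $H_r(\widetilde T)\ne 0$. If the copy of $F$ in $\pi_1(T)$ acts trivially on $H_r(\widetilde T)$ we are finished: pick $\ZZ^n\le\im(e)\le\pi_1(T)$, pass to the cover $T_n\to T$ with $\pi_1(T_n)=\ZZ^n$ (so $\widetilde{T_n}=\widetilde T$ and $\dim T_n\le D+1$), and Lemma~\ref{DimensionLemma} gives $D+1\ge\dim T_n\ge n+r$, which is impossible once $n>D+1$.

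So the theorem reduces to a single assertion: the image of $F$ in $\pi_1(Tel(f))$ acts trivially on the highest non-trivial homology group of the universal cover. By the second paragraph it is enough to find there a non-zero class fixed by $\widetilde f_*$, and I expect this to be the main obstacle --- the place where the distinctive feature of the telescope must be used, namely that $d\circ u\colon Tel(f)\to Tel(f)$ (which by Lemma~\ref{FirstGeometricObstruction} is itself a non-splitting homotopy idempotent) restricts to the identity on every finite complex by Lemma~\ref{FiniteComplexesLemma}. On the universal cover this forces the induced self-map to carry each homology class into its $\pi_1$-orbit; one must combine this with the relation $\widetilde f_*=(x_0)_*\widetilde f_*^{\,2}$, and with the fact that the homotopies provided by Lemma~\ref{FiniteComplexesLemma} trace, at the base point, the canonical line $R$ together with iterates of $\omega$, to pin the action of $\im(e)$ on $H_r(\widetilde T)$ down to the trivial one. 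Making this step precise --- the delicate and genuinely new part --- is what rectifies and simplifies the treatment in Hastings--Heller~\cite{HH}; once it is in hand, the contradiction in the previous paragraph completes the proof.
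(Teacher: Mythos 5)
Your reduction is sound and follows the same route as the paper: pass to $T=Tel(f)$ (dimension at most $\dim K+1$), note that $F$ embeds in $\pi_1(T)$, identify the universal cover with $Tel(\widetilde f)$ via Lemma \ref{UniversalCoverOfTel}, and derive the contradiction from Lemma \ref{DimensionLemma} applied to the cover of $T$ corresponding to a free abelian subgroup of rank exceeding $\dim T$, with the acyclic case disposed of separately. But you explicitly stop short of the one step that carries the content of the theorem: producing a non-zero class in the homology of $Tel(\widetilde f)$ on which a free abelian subgroup of arbitrarily large rank acts trivially. You say you ``expect this to be the main obstacle'' and leave it as an assertion to be made precise, so as written the proof has a genuine gap. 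Your proposed route to it --- finding a class literally fixed by $\widetilde f_*$ and then deducing invariance under $x_0$ from $\widetilde f_*=(x_0)_*\widetilde f_*^{\,2}$ --- is also not the most natural one, since it is not clear how to produce such a strictly fixed class at a finite stage.

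The paper closes this gap with a short telescope-specific observation that you nearly state but do not exploit: in $Tel(\widetilde f)$, a class in the image of $H(\widetilde K_n)$ is homologous to its push-forward one stage down the telescope, i.e.\ applying $\widetilde f$ does not change the class in $H(Tel(\widetilde f))$. Combining this with the conjugation identity of Lemma \ref{GeneralConjugation} (which says that acting by $a_i$ and then applying $\widetilde f$ equals applying $\widetilde f$ and then acting by $a_{i+1}$), one gets that $a_i\cdot\omega$ and $a_{i+1}\cdot\omega$ are homologous for every class $\omega$ coming from a sufficiently late stage $\widetilde K_n$ and every $i$ below a fixed bound depending on $\dim K$. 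Hence each $a_{3i}^{-1}a_{3i+1}$ in that range fixes $\omega$, and by Lemma \ref{FreeAbelianSubgroupsOfF} these generate a free abelian subgroup of rank $\dim K+2$; choosing $\omega\ne 0$ in the top non-vanishing degree (possible unless $Tel(\widetilde f)$ is acyclic, which you already handled) feeds directly into Lemma \ref{DimensionLemma}. You should also note that the paper completes the statement for disconnected $K$, including components not mapped into themselves, which your sketch omits entirely.
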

\begin{proof}
First, assume $K$ is connected.
Suppose $f$ does not split.

Now, we may assume $f$ preserves a base point.
Finally, we may assume that the natural homomorphism $e:F\to \pi_1(K)$ is a monomorphism 
by \ref{AlgebraicObstruction}. Notice that the composition
$F\to \pi_1(K)\to \pi_1(Tel(f))$ is also a monomorphism by using \ref{T1Heller-Freyd}.

Let $m=\dim(K) < \infty$.

Notice all $a_i$, $i < 3m+6$, act the same way on any element
$\omega$ in the image of $H(\tilde K_n)\to H(Tel(\tilde f))$ for $n > 3m+6$. 
Indeed,
the meaning of
\ref{GeneralConjugation} is that $\tilde f(a_i\cdot (\tilde f)(\omega))=
a_{i+1}\cdot (\tilde f)(\omega)$, so both
$a_i\cdot (\tilde f)(\omega)$ and $a_{i+1}\cdot (\tilde f)(\omega)$ are homologous.

The images of $H(\tilde K_n)\to H(Tel(\tilde f))$ cannot be trivial as in that case $Tel(\tilde f))$ is contractible and the reasoning below leads to a contradiction.

By \ref{FreeAbelianSubgroupsOfF}, there is a subgroup
$H\sim \mathbb{Z}^{m+2}$ of $\pi_1(Tel(f))$ that acts trivially 
on an element of the homology of $Tel(\tilde f)$. Now, there is a covering (up to homotopy, see \ref{UniversalCoverOfTel})
$Tel(\tilde f)\to L$, where $L\to Tel(f)$ is a covering corresponding to $H$. Therefore, $\dim(L)\ge m+2$, a contradiction.

If $K$ is not connected but $f(C)\subset C$ for each component $C$ of $K$,
then we can split $f|C$ as $d_C:C\to L_C$, $u_C:L_C\to C$ for each $C$
and define $L$ as the disjoint union of all $L_C$, $C$ a component if $K$.
It is easy to see that splicing all $d_C$ gives $d:K\to L$ and splicing all
$u_C:L_C\to C$ gives $u:L\to K$ that show a splitting of $f$.

If $f(C)$ is not subset of $C$ for some components $C$ of $K$, then we may
split $K$ as the disjoint union $K_0\oplus K_1$, where $K_1$ is the union of all components $C$ of $K$ such that $f(C)\subset C$ and $K_1$ is the union of all the remaining components. Notice that $f(K_1)\subset K_0$, so $K_0$ is not empty.

Let $d_0:K_0\to L$ and $u_0:L\to K_0$ be a splitting of $f|K_0:K_0\to K_0$. Define $u:L\to K$ as $u_0$.
Define $d:K\to L$ as $d|K_0=d_0$ and $d|C=d_C\circ f|C$,
where $d_C=d_D$, $D$ being the component of $K$ containing $f(C)$.
Obviously, $d\circ u\sim id_L$ and $u\circ d|K_0\sim f|K_0$.
Given a component $C$ of $K_1$ and the component $D$ of $K$ containing $f(C)$,
 $u\circ d_C=u_D\circ d_D\circ f|C\sim (f|D)\circ (f|C)=f^2|C\sim f|C$.
That means $f$ splits.
\end{proof}

Notice, in view of \ref{FiniteComplexesLemma}, that the following conjecture, if valid, would be stronger than Theorem \ref{MainResult}:
\begin{Conjecture}\label{MyConjecture}
If $K$ is a CW complex of finite dimension and $f:K\to K$ has the property that $f\circ g\sim g$ for every map $g:L\to K$ such that $L$ is a finite CW complex, then $f$ is a homotopy equivalence.
\end{Conjecture}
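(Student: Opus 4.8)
The plan is to prove that $f$ is a weak homotopy equivalence; since $K$ is a CW complex, Whitehead's theorem then yields that $f$ is a homotopy equivalence. A preliminary reduction to the connected case is immediate: applying the hypothesis to the inclusion $g\colon\{x\}\hookrightarrow K$ of a point shows $f(x)$ lies in the component of $x$, so $f$ carries each component into itself and $\pi_0(f)$ is the identity; one may then treat a single component and, after a homotopy, assume $f$ fixes a base point $x_0$.

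Next come two soft consequences of the hypothesis. Every class in $H_*(K;A)$, for arbitrary coefficients $A$, is supported on a finite subcomplex, and $f\circ\iota\sim\iota$ for the inclusion $\iota$ of any such subcomplex; hence $f$ induces the identity on $H_*(K;A)$, and the same argument applied to the preimages of finite subcomplexes shows $f$ induces the identity on the homology of every cover of $K$. Moreover, for a based loop $\gamma\colon S^1\to K$ the (unbased) homotopy $f\circ\gamma\sim\gamma$ forces $f_*[\gamma]$ to be conjugate to $[\gamma]$ in $\pi_1(K,x_0)$; in particular $f_*[\gamma]=1$ implies $[\gamma]$ is conjugate to $1$, so $[\gamma]=1$, and therefore $f_*\colon\pi_1(K,x_0)\to\pi_1(K,x_0)$ is \emph{injective}. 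Testing against finite wedges of circles sharpens this: on every finitely generated subgroup, $f_*$ agrees with the inclusion up to a single conjugation.

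Everything then reduces to one assertion: that $f_*\colon\pi_1(K,x_0)\to\pi_1(K,x_0)$ is also \emph{surjective}. Granting it, $f_*$ is an automorphism, so the base-point-preserving lift $\tilde f\colon\widetilde K\to\widetilde K$ to the universal cover exists; since $\widetilde K$ is the directed union of the preimages of finite subcomplexes of $K$ and $\tilde f$ moves each such piece in a way compatible with the downstairs homotopies $f\circ\iota\sim\iota$, one gets that $\tilde f$ induces the identity on $H_*(\widetilde K)$. As $\widetilde K$ is simply connected, $\tilde f$ is then a homology isomorphism between simply connected CW complexes, hence a homotopy equivalence; together with $\pi_1(f)$ being an isomorphism this makes $f$ a weak equivalence.

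I expect the surjectivity of $\pi_1(f)$ to be the crux. When $\pi_1(K)$ is finitely generated there is nothing to do: the single conjugator supplied by a finite generating set shows $f_*$ is an inner automorphism. The difficulty is that for infinitely generated $\pi_1$ one is handed only a coherent system of local conjugators, one for each finite subcomplex, and must assemble them into a single global one — and it is precisely here that finite-dimensionality of $K$ must enter, and enter essentially: by Lemma \ref{FiniteComplexesLemma} the map $d\circ u\colon Tel(f)\to Tel(f)$ built from the unsplittable idempotents of \cite{JD} and \cite{FH} satisfies the conjecture's hypothesis on the infinite-dimensional complex $Tel(f)$ yet is not a homotopy equivalence, so no purely formal argument can succeed. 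A plausible route is either an obstruction-theoretic induction up the finitely many skeleta of $K$ — at each stage exploiting $H^{>\dim K}(K;-)=0$ together with the fact that $f$ agrees with the identity on every finite subcomplex — or an argument modeled on the proof of Theorem \ref{MainResult}, feeding a putative failure of surjectivity into a free abelian subgroup of $\pi_1$ acting trivially on the homology of a suitable cover and contradicting the dimension estimate of Lemma \ref{DimensionLemma}. Carrying either of these out, and in particular excluding the phantom-map phenomenon measured by $\varprojlim^1$, is the main obstacle, and presumably the reason the statement is posed as a conjecture rather than a theorem.
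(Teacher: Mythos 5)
This statement is posed in the paper as a \emph{conjecture}: the paper offers no proof of it, so there is nothing to compare your argument against, and the honest conclusion of your own write-up --- that the surjectivity of $\pi_1(f)$ is an unresolved crux --- is consistent with the statement being open. Your preliminary reductions are sound: $f$ preserves components; it induces the identity on $H_*(K;A)$ because every class is supported on a finite subcomplex and $f\circ\iota\sim\iota$ for each such inclusion $\iota$; it is injective on $\pi_1$; and on each finitely generated subgroup $f_*$ agrees with a single inner automorphism (test against a finite wedge of circles and read off the trace of the base point under the free homotopy). Your remark that $d\circ u$ on the telescope of an unsplittable idempotent shows the finite-dimensionality hypothesis is essential is also correct, and is exactly the paper's reason for placing the conjecture after Lemma \ref{FiniteComplexesLemma}.

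Beyond the gap you acknowledge, there is a second, unacknowledged one in the conditional part of your argument. Even granting that $f_*$ is an automorphism of $\pi_1(K,x_0)$, the claim that $\tilde f$ induces the identity on $H_*(\widetilde K)$ does not follow from the downstairs homotopies $f\circ\iota\sim\iota$: a free homotopy lifts to a homotopy from $\tilde f\circ\tilde\iota$ to $\gamma\cdot\tilde\iota$ for some deck transformation $\gamma$ that depends on the finite subcomplex in question, and deck transformations can act nontrivially on $H_*(\widetilde K)$. Taming exactly this ambiguity is what the Thompson-group machinery is for (compare Lemma \ref{GeneralConjugation} and the discussion of how the generators $a_i$ act on $H(Tel(\tilde f))$ in the proof of Theorem \ref{MainResult}); it cannot be absorbed into the phrase ``compatible with the downstairs homotopies.'' So even the half of your reduction that you treat as routine requires the same kind of argument as the half you flag as hard. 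As it stands the proposal is a useful map of the problem rather than a proof, and the two places where it would have to be completed --- assembling the local conjugators into a global one, and controlling the deck transformations acting on the homology of the universal cover --- are precisely where the difficulty of the conjecture lives.
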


\begin{Lemma}
If $f:G\to G$ is an endomorphism of a free groups such that for any homomorphism
$g:H\to G$ from a finitely generated free group $H$ the composition $f\circ g$ is conjugate to $g$, then $f$ is conjugate to $id_G:G\to G$.
\end{Lemma}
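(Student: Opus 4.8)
The plan is to use the fact that a free group is controlled by its finitely generated subgroups, together with the rigidity of centralizers in free groups. First I would dispose of the case where $G$ itself is finitely generated (in particular $G$ trivial or infinite cyclic) in one stroke: then $id_G$ is an admissible test homomorphism, so $f=f\circ id_G$ is conjugate to $id_G$, which literally says $f(x)=cxc^{-1}$ for all $x\in G$ for a suitable $c\in G$. So from now on assume $G$ is not finitely generated; note this forces $G$ to be non-abelian, since the only abelian free groups are the trivial group and $\mathbb{Z}$.

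Next I would pin down the conjugating element. Choose $x_0,y_0\in G$ that do not commute and set $A=\langle x_0,y_0\rangle$; then $A$ is non-cyclic. Applying the hypothesis to a surjection $g_0:F(a,b)\to A$ with $g_0(a)=x_0$, $g_0(b)=y_0$ yields some $c\in G$ with $f(x_0)=cx_0c^{-1}$ and $f(y_0)=cy_0c^{-1}$, i.e.\ $f$ agrees with conjugation by $c$ on all of $A$. The one nontrivial ingredient I need here is that in a free group the centralizer of a non-cyclic subgroup is trivial: if some $w$ commuted with both $x_0$ and $y_0$, then $\langle w,x_0\rangle$ and $\langle w,y_0\rangle$, being abelian subgroups of a free group, would each be cyclic, forcing $x_0$ and $y_0$ to be powers of a common element and contradicting the non-cyclicity of $A$. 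Hence $c$ is the \emph{unique} element of $G$ with $f|_A=(x\mapsto cxc^{-1})|_A$.

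Finally I would propagate $c$ to all of $G$. Given any $z\in G$, apply the hypothesis to a surjection $F(a,b,d)\to\langle x_0,y_0,z\rangle$ sending $a,b,d$ to $x_0,y_0,z$; this produces some $c_z\in G$ with $f(x)=c_z x c_z^{-1}$ for all $x\in\langle x_0,y_0,z\rangle$, in particular on $A$. By the uniqueness just established, $c_z=c$, so $f(z)=czc^{-1}$. Since $z$ was arbitrary, $f$ is conjugation by $c$, as desired. The main obstacle, such as it is, is really just the centralizer fact for free groups; everything else is bookkeeping across the finitely generated subgroups. It is worth remarking that this lemma is the purely group-theoretic shadow of Conjecture \ref{MyConjecture}.
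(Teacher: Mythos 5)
Your proof is correct, and it takes a genuinely different route from the paper's. The paper works generator by generator: it fixes a free basis $\{x_s\}_{s\in S}$, extracts a conjugator $a(s)$ for each basis element, and then must cope with the fact that the centralizer of a single basis element is infinite cyclic rather than trivial --- so the conjugator is only determined up to a power of $x_s$, and the paper controls this ambiguity with a counting argument on the minimal number $n(s)$ of generators needed to express $a(s)$, followed by a normalization making $n(z)=0$ and a three-element comparison to synchronize the exponents. Your argument short-circuits all of this: by testing $f$ against a homomorphism from $F(a,b)$ hitting two non-commuting elements $x_0,y_0$, you land in the situation where the centralizer of $\{x_0,y_0\}$ is \emph{trivial} (since abelian subgroups of free groups are cyclic, and elements sharing a nontrivial common power commute), so the conjugating element $c$ is unique; the hypothesis applied to triples $\{x_0,y_0,z\}$ then forces every local conjugator to equal $c$, giving $f(z)=czc^{-1}$ for all $z$ at once. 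Your treatment of the finitely generated case (apply the hypothesis to $id_G$ itself) matches what the paper dismisses as obvious. What your approach buys is the elimination of the basis-dependent bookkeeping and the $n(s)$ minimality argument; what the paper's approach buys is, arguably, nothing extra here, though it stays closer in spirit to the "one generator at a time" flavor of the topological Conjecture \ref{MyConjecture} that this lemma is meant to model. The only point you should make fully explicit is the step "forcing $x_0$ and $y_0$ to be powers of a common element": from $w=u^k=v^m$ with $k,m\ne 0$ one needs that $u$ and $v$ lie in the (cyclic) centralizer of $w\ne 1$, hence commute; this is standard but deserves a line.
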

\begin{proof}
Obviously, it is true if $G$ is finitely generated, so assume it is not finitely generated.
Enumerate free generators of $G$ as $x_s$, $s\in S$. For each
$s\in S$ there is $a(s)\in G$ such that $g(x_s)=a(s)^{-1}\cdot x_s\cdot a(s)$. We choose the one for which the smallest number $n(s)$ of generators is needed to express it (that is not the same as the length of $a(s)$). 

Given $s,t\in S$, $s\ne t$, there is $b\in G$ such that
$$g(x_s)=b^{-1}\cdot x_s\cdot b$$
and 
$$g(x_t)=b^{-1}\cdot x_t\cdot b.$$
That implies $b\cdot a(s)^{-1}$ is a power of $x_s$ as it commutes with $x_s$. Consequently,
$$n(s)\leq n(b)\leq n(s)+1.$$
Similarly,
$$n(t)\leq n(b)\leq n(t)+1.$$
That means there are at most two values in the set $\{n(s)\}_{s\in S}$.

Now, pick $z\in S$ and define $h:G\to G$ as $h(x)=a(z)\cdot g(x)\cdot a(z)^{-1}$. $h$ has the same crucial property as $g$ but the new function $n$ for $h$ has $n(z)=0$, so $n(t)\leq 1$ for all $t\in S$.

Given $s\in S$, $s\ne z$, there is $b\in G$ such that
$$h(x_s)=b^{-1}\cdot x_s\cdot b$$
and 
$$x_z=h(x_z)=b^{-1}\cdot x_z\cdot b.$$
That implies $b$ is a power of $x_z$ as it commutes with $x_z$. 

Suppose there are $s,t\in S\setminus\{z\}$ such that
$h(x_s)=x_z^{-k}\cdot x_s\cdot x_z^k$ and $h(x_t)=x_z^{-m}\cdot x_s\cdot x_z^m$, where $m\ne k$. This time choose $b$ as above but for three values $s,t,z$ of $S$ to arrive at a contradiction.

Thus, $h(x)=x_z^{-k}\cdot x\cdot x_z^k$ for some $k$ and all $x\in G$.
\end{proof}

\begin{Corollary}
Suppose $K$ is a CW complex and $f:K\to K$ has the property that $f\circ g\sim g$ for every map $g:L\to K$ such that $L$ is a finite CW complex. If $\dim(K)\leq 1$, then $f$ is a homotopy equivalence.
\end{Corollary}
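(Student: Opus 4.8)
The plan is to translate the topological hypothesis into the purely algebraic hypothesis proved in the Lemma immediately above, applied to $G=\pi_1(K)$, and then to invoke the asphericity of $1$-dimensional complexes.

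First I would dispose of the disconnected case. Testing the hypothesis against constant maps $g\colon\{*\}\to K$, the relation $f\circ g\sim g$ says exactly that $f(*)$ lies in the path component of $*$; since path components of a CW complex are its components, $f(C)\subseteq C$ for every component $C$. The hypothesis passes to each restriction $f|_C\colon C\to C$ (a finite complex mapping into $C$ is in particular one mapping into $K$, and a homotopy $f\circ g\sim g$ with $g(L)\subseteq C$ stays in $C$, arguing componentwise on $L$), so it suffices to treat connected $K$, which I assume henceforth.

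So let $K$ be a connected $1$-dimensional CW complex, i.e. a graph. Then $G:=\pi_1(K)$ is a free group (collapse a spanning tree, or use Nielsen--Schreier). Choose a basepoint $v$; since replacing $f$ by a homotopic map preserves the hypothesis, I may assume $f(v)=v$ and set $\phi:=\pi_1(f)\colon G\to G$. I claim $\phi$ satisfies the hypothesis of the previous Lemma: for every homomorphism $\psi\colon H\to G$ with $H$ finitely generated free, $\phi\circ\psi$ is conjugate to $\psi$ in $G$. To see this, write $H=\pi_1(L)$ for a finite wedge of circles $L$ and realize $\psi$ by a cellular map $g\colon L\to K$ with $g(*)=v$ (send the $i$-th circle around a loop at $v$ representing the image of the $i$-th generator), so that $\pi_1(g)=\psi$. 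By hypothesis $f\circ g\sim g$; restricting such a free homotopy to $\{*\}\times I$ yields a loop $\gamma$ at $v$, and the standard change-of-basepoint argument shows that $\phi\circ\psi$ and $\psi$ differ by conjugation by $[\gamma]\in G$. The previous Lemma then gives that $\phi$ is conjugate to $id_G$, in particular an automorphism of $G$. Finally, the universal cover of $K$ is a simply connected graph, hence a tree, hence contractible, so $\pi_n(K)=0$ for all $n\ge 2$; thus $f$ induces isomorphisms on every homotopy group, and by the Whitehead Theorem $f$ is a homotopy equivalence.

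The only genuinely delicate point is the basepoint bookkeeping in the third paragraph: making precise that an \emph{unpointed} homotopy $f\circ g\sim g$ produces a \emph{conjugacy} (not an equality) between the induced homomorphisms on $\pi_1$. Everything else --- that $\pi_1$ of a graph is free and that graphs are aspherical, and that an arbitrary homomorphism between free groups is realized by a map of wedges of circles --- is routine. I would also remark that this corollary is exactly the case $\dim(K)\le 1$ of Conjecture \ref{MyConjecture}, which, in view of Lemma \ref{FiniteComplexesLemma}, would in that range recover Theorem \ref{MainResult}.
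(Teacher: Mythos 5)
Your proof is correct and follows exactly the route the paper intends: the paper states this corollary without proof immediately after the free-group lemma, and your argument (reduce to connected graphs, realize homomorphisms from finitely generated free groups by maps of finite wedges of circles to verify the lemma's hypothesis for $\pi_1(f)$, then use asphericity of graphs and the Whitehead Theorem) is the evident intended derivation, with the basepoint/conjugacy bookkeeping and the disconnected case handled carefully.
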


\begin{Question}\label{UniversalGroupConjecture}
Is there a universal group that measures an obstruction of $f:K\to K$ to
be a homotopy equivalence if $f:K\to K$ has the property that $f\circ g\sim g$ for every map $g:L\to K$ such that $L$ is a finite CW complex?
\end{Question}

\section{Free homotopy groups}\label{FreeHomotopyGroups}

In this section we introduce free homotopy groups in order to avoid picking a base point in the telescope $Tel(f)$. $Tel(f)$ has a natural contractible subset $A$ that should serve as its base subset.

\begin{Definition}
Given a non-empty path-connected subset $A$ of a topological space $X$ such that any map
$f:S^1\to A$ is null-homotopic in $X$, we define the \textbf{free fundamental group} $\pi_1(X,A)$ as follows:\\
1. Its elements are homotopy classes of maps $a:(I,\partial I)\to (X,A)$, where $a$ is homotopic to $b$ if there are paths $u, v$ in $A$ joining
$a(1)$ to $b(0)$ and $b(1)$ to $a(1)$ such that the path
$a\ast u\ast b\ast v$ is homotopic to $a$ relatively to endpoints.\\
2. The product $[a]\ast [b]$ of two homotopy classes is $[c]$,
where $c=a\ast v\ast b$, $v$ being a path in $A$ joining $a(1)$ and $b(0)$.
\end{Definition}

\begin{Exercise}
The definition of $\pi_1(X,A)$ is sound.
\end{Exercise}

\begin{Exercise}
$\pi_1(X,A)$ is a group.
\end{Exercise}

\begin{Exercise}
If $B\subset A$, then the natural function $\pi_1(X,B)\to \pi_1(X,A)$ is an isomorphism. In particular, $\pi_1(X,x_0)\to \pi_1(X,A)$ is an isomorphism
for any $x_0\in A$.
\end{Exercise}

\begin{Corollary}
If $A\cap B\ne\emptyset$, then $\pi_1(X,A)$ is isomorphic
to $\pi_1(X,B)$.
\end{Corollary}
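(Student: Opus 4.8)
The plan is to reduce everything to the ordinary fundamental group based at a single point, using the isomorphisms already supplied by the preceding exercises. First I would fix a point $x_0\in A\cap B$, which exists by hypothesis. The singleton $\{x_0\}$ is non-empty, path-connected, and every map $S^1\to\{x_0\}$ is constant, hence null-homotopic in $X$; so $\pi_1(X,\{x_0\})$ is defined, and it coincides with the usual fundamental group $\pi_1(X,x_0)$.

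Next I would invoke the exercise asserting that whenever $B_0\subset A_0$ the natural (inclusion-induced) map $\pi_1(X,B_0)\to\pi_1(X,A_0)$ is an isomorphism. Applying it once with $B_0=\{x_0\}$, $A_0=A$, and once with $B_0=\{x_0\}$, $A_0=B$, produces isomorphisms $\iota_A\colon\pi_1(X,x_0)\to\pi_1(X,A)$ and $\iota_B\colon\pi_1(X,x_0)\to\pi_1(X,B)$. Then $\iota_B\circ\iota_A^{-1}\colon\pi_1(X,A)\to\pi_1(X,B)$ is the desired isomorphism, and by construction it commutes with the canonical maps from $\pi_1(X,x_0)$.

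The only point requiring any care — and it is in fact the \emph{entire} content — is the well-definedness and bijectivity of those inclusion-induced maps from $\pi_1(X,x_0)$, which is precisely what the cited exercise delivers; granting it, the corollary is merely a composition of isomorphisms, so there is no real obstacle. If one wanted a self-contained argument instead, one could define the comparison map directly by sending the class of $a\colon(I,\partial I)\to(X,A)$ to the class of $p\ast a\ast q$, where $p$ and $q$ are paths in $A$ joining $x_0$ to $a(0)$ and $a(1)$ to $x_0$; verifying independence of the choices of $p$, $q$, and of the representative $a$ reproduces, in a less tidy form, the proof of that exercise, so nothing is gained.
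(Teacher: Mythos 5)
Your proposal is correct and is exactly the intended argument: pick $x_0\in A\cap B$ and compose the two isomorphisms $\pi_1(X,x_0)\to\pi_1(X,A)$ and $\pi_1(X,x_0)\to\pi_1(X,B)$ supplied by the preceding exercise. The paper leaves the proof to the reader, but this is precisely the route it sets up.
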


\begin{Exercise}
If $X$ is path-connected, locally path-connected, and semi-locally simply connected, then the universal cover $\tilde X$ of $X$ can be represented as the space of homotopy classes
of maps $a:(I,0)\to (X,A)$ relative $1$.

The action of $\pi_1(X,A)$ on $\tilde X$ can be described as
$[a]\ast [x]=[a\ast x]$.
\end{Exercise}

\begin{Definition}
Let $n > 1$.
Given a non-empty path-connected subset $A$ of a topological space $X$ such that any map
$f:S^k\to A$, $k\leq n$, is null-homotopic in $X$, we define the \textbf{free homotopy group} $\pi_n(X,A)$ as follows:\\ 
1. Its elements are homotopy classes of maps $a:(I^n,\partial I^n)\to (X,A)$, where $a$ is homotopic to $b$ if there are homotopies $u, v$ in $A$ joining
$a|I^{n-1}\times 1$ to $b|I^{n-1}\times 0$ and $b|I^{n-1}\times 1$ to $a|I^{n-1}\times 0$ such that the path
$a\ast u\ast b\ast v$ is homotopic to $a$ relatively to $ \partial I^n$.\\
2. The product $[a]\ast [b]$ of two homotopy classes is $[c]$,
where $c=a\ast v\ast b$, $v$ being a homotopy in $A$ joining $a|I^{n-1}\times 1$ and $b|I^{n-1}\times 0$.
\end{Definition}

\begin{Exercise}
The definition of $\pi_n(X,A)$ is sound.
\end{Exercise}

\begin{Exercise}
$\pi_n(X,A)$ is an Abelian group.
\end{Exercise}

\begin{Exercise}
If $B\subset A$, then the natural function $\pi_n(X,B)\to \pi_n(X,A)$ is an isomorphism. In particular, $\pi_n(X,x_0)\to \pi_n(X,A)$ is an isomorphism
for any $x_0\in A$.
\end{Exercise}

\begin{Corollary}
If $A\cap B\ne\emptyset$, then $\pi_n(X,A)$ is isomorphic
to $\pi_n(X,B)$.
\end{Corollary}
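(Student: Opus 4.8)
The plan is to reduce everything to the preceding Exercise by routing through a single point lying in both subsets. Since $A\cap B\neq\emptyset$, fix a point $x_0\in A\cap B$. First I would observe that the singleton $\{x_0\}$ satisfies the standing requirements needed to form $\pi_n(X,\{x_0\})$: it is trivially path-connected, and every map $S^k\to\{x_0\}$ is constant, hence null-homotopic in $X$. Moreover $\{x_0\}\subset A$ and $\{x_0\}\subset B$, and all three groups $\pi_n(X,A)$, $\pi_n(X,B)$, $\pi_n(X,x_0)$ are defined under the hypotheses in force.

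Next I would invoke the preceding Exercise twice. Applying it with $\{x_0\}$ in the role of the smaller subset and $A$ in the role of the larger one shows that the natural homomorphism $\pi_n(X,x_0)\to\pi_n(X,A)$ is an isomorphism; applying it with $\{x_0\}\subset B$ shows that $\pi_n(X,x_0)\to\pi_n(X,B)$ is an isomorphism. The composite of the inverse of the first with the second is then an isomorphism $\pi_n(X,A)\to\pi_n(X,B)$, which is the claim. Concretely, this composite sends a class in $\pi_n(X,A)$ to a representative whose boundary maps into $x_0$ (such a representative exists by surjectivity of the first map) and then reinterprets that representative, verbatim, as a class in $\pi_n(X,B)$.

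There is no genuine obstacle here; the statement is a formal consequence of the Exercise, and the same argument gives the $n=1$ version recorded earlier. The only points worth a sentence are that the resulting isomorphism is independent of the choice of $x_0\in A\cap B$, since any two points of $A$ yield the same identification via the ``in particular'' clause of the Exercise and likewise for $B$, so the ambiguity is harmless; and that one should double-check the soundness/naturality bookkeeping so that the composite is honestly a group homomorphism, which is routine.
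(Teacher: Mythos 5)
Your proposal is correct and is exactly the intended argument: the paper states this Corollary as an immediate consequence of the preceding Exercise, obtained by picking $x_0\in A\cap B$ and composing the two isomorphisms $\pi_n(X,x_0)\to\pi_n(X,A)$ and $\pi_n(X,x_0)\to\pi_n(X,B)$. Your additional checks (that $\{x_0\}$ satisfies the standing hypotheses, and that the identification is independent of the choice of $x_0$) are sound and slightly more careful than what the paper records.
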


\end{document}